\documentclass[a4paper,11pt]{article}
\usepackage{color}

\usepackage[english]{babel}
\usepackage{amsmath,amsthm,amssymb,mathrsfs,latexsym,amsfonts}
\usepackage{graphicx,psfrag,epsfig}
\usepackage{bbm}
\usepackage{a4wide}
\usepackage{authblk}

\numberwithin{equation}{section}
\usepackage{latexsym}
\newtheorem{theorem}{Theorem}[section]
\newtheorem{lemma}[theorem]{Lemma}
\newtheorem{proposition}[theorem]{Proposition}

\newtheorem{definition}{Definition\rm}

\newcounter{paraga}[section]

\usepackage[latin1]{inputenc}

\newcommand{\N}{\mathbb{N}}
\newcommand{\Z}{\mathbb{Z}}
\newcommand{\Q}{\mathbb{Q}}
\newcommand{\R}{\mathbb{R}}
\newcommand{\C}{\mathbb{C}}

\renewcommand{\a}{\alpha}
\renewcommand{\o}{\omega}

\begin{document}

\def\MP{\,{<\hspace{-.5em}\cdot}\,}
\def\SP{\,{>\hspace{-.3em}\cdot}\,}
\def\PM{\,{\cdot\hspace{-.3em}<}\,}
\def\PS{\,{\cdot\hspace{-.3em}>}\,}
\def\EP{\,{=\hspace{-.2em}\cdot}\,}
\def\PP{\,{+\hspace{-.1em}\cdot}\,}
\def\PE{\,{\cdot\hspace{-.2em}=}\,}
\def\N{\mathbb N}
\def\C{\mathbb C}
\def\Q{\mathbb Q}
\def\R{\mathbb R}
\def\T{\mathbb T}
\def\A{\mathbb A}
\def\Z{\mathbb Z}
\def\demi{\frac{1}{2}}

\newtheorem{Main}{Theorem}
\newtheorem{Coro}{Corollary}
\newtheorem*{Principal}{Theorem}

\renewcommand{\theMain}{\Alph{Main}}
\renewcommand{\theCoro}{\Alph{Coro}}

\setcounter{tocdepth}{3}

\begin{titlepage}
\author{Abed Bounemoura\footnote{CNRS - CEREMADE - IMCCE/ASD, abedbou@gmail.com} {} and Bassam Fayad\footnote{CNRS - IMJ-PRG and Centro Ennio De Giorgi, bassam.fayad@imj-prg.fr} {} and Laurent Niederman\footnote{Laboratoire Math\'ematiques d'Orsay \& IMCCE/ASD, Laurent.Niederman@math.u-psud.fr}}
\title{\LARGE{\textbf{Double exponential stability of quasi-periodic motion in Hamiltonian systems}}}
\end{titlepage}

\maketitle

\begin{abstract}
We prove that generically, both in a topological and measure-theoretical sense, an invariant Lagrangian Diophantine torus of a Hamiltonian system is doubly exponentially stable in the sense that nearby solutions remain close to the torus for an interval of time which is doubly exponentially large with respect to the inverse of the distance to the torus. We also prove that for an arbitrary small perturbation of a generic integrable Hamiltonian system, there is a set of almost full positive Lebesgue measure of KAM tori which are doubly exponentially stable. Our results hold true for real-analytic but more generally for Gevrey smooth systems.
\end{abstract}
 
\section{Introduction and results}
\newcommand{\marge}[1]{\marginpar{\tiny #1}}

\subsection{Introduction}

The goal of this paper is to prove that invariant Lagrangian Diophantine tori in Hamiltonian systems are generically doubly exponentially stable. We will consider two settings.

In a first setting, we consider a Lagrangian Diophantine torus invariant by a Hamiltonian flow on an arbitrary symplectic manifold. It is well-known that by a symplectic change of coordinates, one can consider a Hamiltonian
\begin{equation}\label{H1}\tag{H}
H(\theta,I)=\omega\cdot I + O\left(||I||^2\right), \quad (\theta,I)\in \T^n \times B,
\end{equation} 
where $\,\cdot\,$ denotes the Euclidean scalar product, $||\,\cdot\,||$ is the associated norm, $n \geq 1$ is an integer, $\T^n:=\R^n / \Z^n$, $B$ is some open bounded neighborhood of the origin in $\R^n$ and $\omega\in \R^n$ is a vector which is assumed to be Diophantine: there exist constants $0<\gamma \leq 1$ and $\tau \geq n-1$ such that for any $k=(k_1,\dots,k_n) \in \Z^n\setminus \{0\}$,
\begin{equation}\label{dio}\tag{Dio$_{\gamma,\tau}$}
|k\cdot\omega|\geq \gamma|k|^{-\tau}, \quad |k|:=|k_1|+\cdots+|k_n|. 
\end{equation} 
The torus $\mathcal{T}_\omega:=\T^n \times \{0\}$ is then invariant by the Hamiltonian flow of $H$, and the flow restricted on $\mathcal{T}_\omega$ is quasi-periodic with frequency $\omega$. There are several questions one can ask about the stability or instability properties of such an invariant torus, for instance:
\begin{itemize}
\item[(1)] Is $\mathcal{T}_\omega$ accumulated by a large set of Lagrangian invariant tori?
\item[(2)] Is $\mathcal{T}_\omega$ topologically unstable? Recall that the invariant torus $\mathcal{T}_\omega$ is said to be topologically stable if it admits a basis of neighborhoods which are positively invariant by the Hamiltonian flow, and that it is topologically unstable if it is not topologically stable. 
\item[(3)] Given an arbitrary $r>0$ small enough and an arbitrary solution starting in the $r$-neighborhood of $\mathcal{T}_\omega$, how large is the ``stability" time $T(r)$ during which the solution remains in the $2r$-neighborhood of $\mathcal{T}_\omega$?  
\end{itemize}
These questions are related to, respectively, KAM theory, Arnold diffusion and Nekhoroshev theory. Concerning $(1)$, it is well-known that if $H$ is sufficiently smooth, under a generic condition, $\mathcal{T}_\omega$ is accumulated by a set of Lagrangian invariant tori of positive Lebesgue measure, which has Lebesgue density one at $\mathcal{T}_\omega$. Assuming $H$ to be real-analytic, without further assumption it is conjectured (see \cite{Her98}) that $\mathcal{T}_\omega$ is accumulated by a set of Lagrangian invariant tori of positive Lebesgue measure (see \cite{EFK15} for some partial results). Concerning $(2)$, almost nothing is known in the real-analytic category : for instance, it is still not known if there exists a real-analytic torus $\mathcal{T}_\omega$ which is topologically unstable. In the smooth category, Douady (\cite{Dou88}) gave examples of topologically unstable invariant tori with any given Birkhoff normal form at the invariant torus. Degenerate examples having almost all orbits oscillating between the neighborhood of the invariant Diophantine torus and infinity were constructed in \cite{EFK15}. As for the generic behavior, even in the smooth category this is still an open problem (see \cite{GK14} for some partial result in the case $n=3$).

Finally, concerning $(3)$, it is well-known that for real-analytic systems, $T(r)$ is always at least exponentially large, more precisely it is of order $\exp\left(r^{-a}\right)$ with the exponent $a=(1+\tau)^{-1}$. A similar result holds true for Gevrey smooth systems (with the exponent $\alpha^{-1}a$ instead of $a$, where $\alpha\geq 1$ is the Gevrey exponent). Moreover, in the real-analytic case and under some convexity assumption (which is non-generic), it was proved in \cite{MG95a} that $T(r)$ is at least doubly exponentially large, that is, it is of order $\exp\exp\left(r^{-a}\right)$. Our first theorem extends this last result: for real-analytic Hamiltonians, or more generally Gevrey smooth Hamiltonians, under a generic condition, $T(r)$ is at least doubly exponentially large; see Theorem~\ref{Th1} below and its corollary for precise statements. This result is the counterpart to a result we previously obtained in the context of elliptic equilibrium points (see \cite{BFN15}).

In a second setting, we consider a small $\varepsilon$-perturbation of an integrable Hamiltonian system in action-angle coordinates, namely 
\begin{equation}\label{H2}\tag{H$_\varepsilon$}
H_\varepsilon(\theta,I)=h(I)+\varepsilon f(\theta,I), \quad (\theta,I)\in \T^n \times \bar{D},
\end{equation}   
where $\bar{D}$ is the closure of some open bounded domain $D \subseteq \R^n$, and $\varepsilon\geq 0$ is a small parameter. When $\varepsilon=0$, letting $\omega_*=\nabla h(I_*) \in \R^n$ for $I_* \in D$, the Lagrangian tori $\mathcal{T}_{\omega_*}:=\{I=I_*\}$ are obviously invariant by the Hamiltonian flow of $H_0=h$, and they are quasi-periodic with frequency $\omega_*$. Assuming that $H_\varepsilon$ is sufficiently smooth and $\varepsilon$ sufficiently small, under the Kolmogorov non degeneracy condition (a generic condition on $h$, see Definition~\ref{defK} below) the classical KAM theorem asserts that most of the invariant tori of the integrable system persist under perturbation. More precisely, let $\Omega$ be the image by $\nabla h$ of $\bar{D}$ and, fixing $0<\gamma \leq 1$, $\tau > n-1$, let  
\begin{equation} \label{def.Omega}
\Omega_{\gamma,\tau}:= \left\{  \omega \in \Omega \, | \, d(\omega,\partial \Omega) \geq \gamma, \; \omega \in {\rm Dio}_{\gamma,\tau} \right\}. 
\end{equation} 
Observe that since $\tau>n-1$, the complement of $\Omega_{\gamma,\tau}$ in $\Omega$ has a measure of order $\gamma$. The KAM theorem asserts that if one assumes that $\varepsilon \leq c \gamma^2$ where $c$ is some positive constant that depends on $h$ and $f$, then there exists a set
\[ \mathcal{K}=\bigcup_{\omega_* \in \Omega_{\gamma,\tau}}\mathcal{T}_{\omega_*}^\varepsilon \subseteq \T^n \times D \]
whose complement has a Lebesgue measure of order at most $\gamma$, and which consists of Lagrangian Diophantine tori $\mathcal{T}_{\omega_*}^\varepsilon$ invariant by $H_\varepsilon$ that converge to $\mathcal{T}_{\omega_*}$ as $\varepsilon$ goes to zero. For real-analytic systems, this is a classical result (see \cite{Pos01} and references therein) and for Gevrey smooth systems, this result is due to Popov (\cite{Pop04}). Our second theorem states that for real-analytic or Gevrey smooth systems, under a further generic condition on $h$ and a further smallness assumption on $\varepsilon$, a sub-family $\mathcal{K}'$ of this family $\mathcal{K}$ of KAM tori (with a complementary still having measure of order at most $\gamma$) is doubly exponentially stable. We refer to Theorem~\ref{Th2} below and its corollary for precise statements. This result extends a previous result obtained in \cite{MG95b}, \cite{GM97} in the real-analytic case, under the stronger (and non-generic) condition that $h$ is quasi-convex.  

\subsection{Results}

Let us now state more precisely our results. Given some bounded domain $U \subseteq \R^n$, and real numbers $\alpha \geq 1$, $\beta\geq 1$, $L_1>0$ and $L_2>0$, we define $G^{\alpha,\beta}_{L_1,L_2}(\T^n \times U)$ to be the space of smooth functions $H$ on $\T^n \times U$ such that
\begin{equation*}
||H||_{\alpha,\beta,L_1,L_2,U}:=\sup_{(k,l)\in \N^{2n}}\sup_{(\theta,I)\in \T^n \times U}\left(|\partial_\theta^k\partial_I^lH(\theta,I)|L_1^{-|k|}L_2^{-|l|}k!^{-\alpha}l!^{-\beta}\right)<\infty.
\end{equation*}
We shall always assume that $L_1 \geq 1$ and $L_2 \geq 1$. This space $G^{\alpha,\beta}_{L_1,L_2}(\T^n \times U)$, equipped with the above norm, is a Banach space. When $\alpha=\beta$ and $L_1=L_2=L$, we simply write $G^{\alpha}_{L}(\T^n \times U)$ and when $\alpha=1$, then $G^{1}_{L}(\T^n \times U)$ is the space of real-analytic functions which can be extended as a holomorphic function on a complex $s$-neighborhood of $\T^n \times U$ in $\C^n/\Z^n \times \C^n$, with $s<L^{-1}$. For simplicity, we shall also say that a function is $(\alpha,\beta)$-Gevrey (respectively $\alpha$-Gevrey) if it belongs to $G^{\alpha,\beta}_{L_1,L_2}(\T^n \times U)$ (respectively to $G^{\alpha}_{L}(\T^n \times U)$). 

Let us introduce the following notations: given some $\rho>0$, some domain $U \subseteq \R^n$ and some subset $S \subset \T^n \times U$, we denote by 
\[ V_\rho U:=\{I \in \R^n \; | \; ||I-U||<\rho  \}, \quad V_\rho S:=\{(\theta,I) \in \T^n \times \R^n \; | \; ||(\theta,I)-S||<\rho  \} \]
the open $\rho$-neighborhood of $U$ in $\R^n$ and of $S$ in $\T^n \times \R^n$.
  
\begin{definition} 
An invariant quasi-periodic Lagrangian torus $\mathcal{T}$ embedded in $\T^n \times U$ is {\it doubly exponentially stable with exponent $u>0$} if there exist positive constants $r_*$ and $C$ such that for any $r\leq r_*$ and any solution $(\theta(t),I(t))$ of the Hamiltonian system associated to $H$ with $(\theta(0),I(0)) \in V_r\mathcal{T}$, we have
\[ (\theta(0),I(0)) \in V_{2r}\mathcal{T}, \quad \forall |t| \leq \exp\left(\exp \left(C r^{-u}\right)\right). \] 
\end{definition}

Now let us come back to our first setting, that is we consider a Hamiltonian as in~\eqref{H1}, with $\omega$ satisfying~\eqref{dio}. In this setting, the torus $\mathcal{T}_\omega$ is doubly exponentially stable with exponent $u>0$ if there exist positive constants $r_*$ and $C$ such that for any $r\leq r_*$ and any solution $(\theta(t),I(t))$ of the Hamiltonian system associated to $H$ with $||I(0)||\leq r$, we have
\[ ||I(t)||\leq 2r, \quad \forall |t| \leq \exp\left(\exp \left(C r^{-u}\right)\right). \] 

If $H$ is smooth, given any integer $m \geq 2$, there exists a symplectic transformation $\Phi_m$ defined on a neighborhood of $\T^n \times \{0\}$ which is close to the identity and such that
\begin{equation} \label{BNFe}\tag{BNF} H \circ \Phi_m(\theta,I)=\omega\cdot I+H_m(I)+O\left(||I||^{m+1}\right) \end{equation}
where $H_m$ is a polynomial in $n$ variables of degree $m$ without constant or linear terms (see, for instance, \cite{Dou88}). One also have a formal symplectic transformation $\Phi_\infty$ and a formal series $H_\infty$ such that $H \circ \Phi_\infty(\theta,I)=H_\infty(I)$. The polynomials $H_m$ (respectively the formal series $H_\infty$) are uniquely defined and they are called the Birkhoff  polynomials (respectively Birkhoff formal series). 

Next we need to recall the following definition from~\cite{BFN15}, which was inspired by the work of Nekhoroshev (\cite{Nek73}). Let $P(n,m)$ be the space of polynomials with real coefficients of degree $m$ in $n$ variables, and $P_2(n,m)\subset P(n,m)$ the subspace of polynomials without constant or linear terms.

\begin{definition}[Stably steep polynomials] \label{stabsteep} 
Given positive constants $\rho$, $C$ and $\delta$, a polynomial $P_0 \in P_2(n,m)$ is called $(\rho,C,\delta)$-stably steep if for any integer $l\in [1,n-1]$, any $P \in P_2(n,m)$ such that $||P-P_0||<\rho$ and any vector subspace $\Lambda \subseteq \R^n$ of dimension $l$, letting $P_\Lambda$ be the restriction of $P$ to $\Lambda$, the inequality
\[ \max_{0 \leq \eta \leq \xi}\;\min_{||x||=\eta, \; x \in \Lambda}||\nabla P_\Lambda(x)||>C\xi^{m-1} \]
holds true for all $0 < \xi \leq\delta$. A polynomial $P_0 \in P_2(n,m)$ is called stably steep if there exist positive constants $\rho$, $C$ and $\delta$ such that $P_0$ is $(\rho,C,\delta)$-stably steep. 
\end{definition}

The set of stably steep polynomials in $P_2(n,m)$ will be denoted by $SS(n,m)$. 

For a fixed $\omega \in \R^n$, we let $\mathcal{H}^\alpha_L(\omega)$ be the space of Hamiltonians $H \in G^\alpha_L(\T^n \times B)$ as in~\eqref{H1}. We can now state our first main result. 

\begin{Main}\label{Th1}
Let $H \in \mathcal{H}^\alpha_L(\omega)$ with $\omega$ satisfying~\eqref{dio}. Assume that for $m_0:=[n^2/2+2]$, we have
\begin{equation}\label{condG}\tag{G}
H_{m_0} \in SS(n,m_0).
\end{equation}
Then ${\mathcal T}_{\o}$ is doubly exponentially stable with exponent $\frac{1}{\alpha(1+\tau)}$.
\end{Main}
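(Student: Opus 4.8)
The plan is to combine a quantitative Birkhoff normal form in the Gevrey category with a Nekhoroshev-type confinement estimate for steep integrable systems, and then to optimize the order of the normal form as a function of the radius $r$. The first optimization produces an exponentially small remainder, and feeding this size into the Nekhoroshev bound produces the second exponential.

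First I would establish a quantitative $\alpha$-Gevrey Birkhoff normal form near $\mathcal{T}_\omega$: for every integer $m\geq 2$ there is a symplectic map $\Phi_m$, $C^1$-close to the identity, defined on a neighbourhood $V_{\rho_m}\mathcal{T}_\omega$ with $\rho_m$ of order $(C\,m^{\alpha(1+\tau)})^{-1}$, such that
\[ H\circ\Phi_m(\theta,I)=\omega\cdot I + H_m(I) + R_m(\theta,I), \qquad \sup_{V_\rho\mathcal{T}_\omega}|R_m|\leq C_1\bigl(C_2\,m^{\alpha(1+\tau)}\,\rho\bigr)^{m+1} \]
for all $0<\rho\leq\rho_m$, where $H_m$ is the $m$-th Birkhoff polynomial from~\eqref{BNFe}. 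The factorial-type loss with exponent $\alpha(1+\tau)$ (equivalently a $(m!)^{\alpha(1+\tau)}$ loss, by Stirling) comes from the $k!^{\alpha}$ loss inherent to $\alpha$-Gevrey regularity together with the Diophantine small divisors $\gamma^{-1}|k|^{\tau}$ accumulated over the $m$ successive normalization steps; this is the counterpart near an invariant Diophantine torus of the Gevrey normal form used in \cite{BFN15} near an elliptic equilibrium, and is in the spirit of \cite{Pop04}.

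Next I would exploit the genericity hypothesis~\eqref{condG}. Since $m_0=[n^2/2+2]$, Nekhoroshev's description of steepness in terms of low-order jets shows that stable steepness of the single polynomial $H_{m_0}$ propagates to a \emph{uniform} steepness of the whole family $H_m$, $m\geq m_0$: there are steepness indices and a steepness radius $\delta_0>0$, all independent of $m$, with respect to which every $H_m$ is steep on $\{||I||<\delta_0\}$. Consequently, for each $m\geq m_0$ the Hamiltonian $N_m(I):=\omega\cdot I+H_m(I)$ is, on a ball of radius $r$, an integrable system whose steep part has $m$-independent steepness, while $R_m$ will be made exponentially small by choosing $m$ large. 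One subtlety enters here: because $\nabla N_m(0)=\omega\neq 0$, the relevant statement is not the plain convex (or steep) Nekhoroshev theorem but its version with a separation of time scales, in which $\omega$ plays the role of a fast, non-resonant frequency — this is where~\eqref{dio} re-enters, beyond its role in the normal form — and $H_m$ governs the slow drift along resonances. Applying this estimate to $N_m+R_m$ on $\{||I||\leq 2r\}$ yields positive constants $a$ and $c_3$, depending on $n$ and the uniform steepness indices but not on $m$, such that any solution with $||I(0)||\leq r$ satisfies $||I(t)-I(0)||\leq r$ for $|t|\leq\exp\bigl(c_3\,\epsilon_m(r)^{-a}\bigr)$, where $\epsilon_m(r):=\sup_{V_r\mathcal{T}_\omega}|R_m|$.

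Finally I would optimize. Choosing $m=m(r):=\bigl[\lambda\,r^{-1/(\alpha(1+\tau))}\bigr]$ for a suitably small fixed $\lambda>0$ guarantees simultaneously that $r<\rho_m$ (so $\Phi_m$ is defined on $V_r\mathcal{T}_\omega$) and, from the bound on $R_m$ and Stirling, that $\epsilon_m(r)\leq\exp\bigl(-c_4\,r^{-1/(\alpha(1+\tau))}\bigr)$. Substituting this into the confinement time gives at least $\exp\bigl(c_3\exp(c_4 a\,r^{-1/(\alpha(1+\tau))})\bigr)\geq\exp\exp\bigl(C\,r^{-1/(\alpha(1+\tau))}\bigr)$ for $r$ small enough; transporting the conclusion back through $\Phi_m$, which is $C^1$-close to the identity and hence carries $V_r\mathcal{T}_\omega$ into $V_{2r}\mathcal{T}_\omega$ with a fixed distortion absorbed into the constants, shows that $\mathcal{T}_\omega$ is doubly exponentially stable with exponent $\frac{1}{\alpha(1+\tau)}$. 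I expect the main difficulty to be the third step: establishing the uniform-in-$m$ Nekhoroshev confinement for the family $\omega\cdot I+H_m(I)+R_m$, i.e. simultaneously (i) deducing $m$-independent steepness of the $H_m$ from stable steepness of the single jet $H_{m_0}$ and (ii) carrying out the resonant-zone geometric part of Nekhoroshev's argument in the presence of the large linear term $\omega\cdot I$ — while tracking all constants sharply enough through both optimizations to land exactly on the exponent $\frac{1}{\alpha(1+\tau)}$.
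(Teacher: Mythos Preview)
Your overall strategy---truncate the Birkhoff normal form at order $m$, apply steep Nekhoroshev, then optimize $m\sim r^{-1/(\alpha(1+\tau))}$---is a legitimate route and can be pushed through, but the paper proceeds differently and in a way that dissolves exactly the difficulty you flag at the end. Rather than working with the family $(\Phi_m,H_m,R_m)$ and having to secure $m$-uniform steepness and $m$-uniform Nekhoroshev constants, the paper invokes the Mitev--Popov Gevrey Birkhoff normal form (Theorem~\ref{thmMP}): a \emph{single} symplectic map $\Phi$ puts $H$ into the form $H_*(I)+R_*(\theta,I)$, where $H_*\in G^\beta$ with $\beta=\alpha(1+\tau)+1$ has Taylor series $H_\infty$ at $0$, and $R_*$ is $(\alpha,\beta)$-Gevrey and \emph{flat} at $I=0$. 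Flatness plus the Gevrey bound, via Taylor's formula and Stirling, yields directly $\|R_*\|_{B_{2r}}\leq A\exp\bigl(-(2L_2r)^{-1/(\alpha(1+\tau))}\bigr)$; your optimization over $m$ is absorbed into this single estimate. Since $T_0^{m_0}H_*=H_{m_0}\in SS(n,m_0)$ by~\eqref{condG}, Proposition~\ref{propsteep1} makes the one function $H_*$ steep on $B_{2r}$, and the Gevrey steep Nekhoroshev theorem (Theorem~\ref{thmNek}) is applied once with $\mu$ equal to the exponentially small bound on $R_*$. No family, no uniformity tracking.

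One point in your sketch is an unnecessary complication: you anticipate needing a ``separation of time scales'' Nekhoroshev because $\nabla N_m(0)=\omega\neq 0$. This is not so. The definition of steepness (Definition~\ref{funcsteep}) already contains the requirement $\|\nabla h\|\geq\kappa>0$, and the non-vanishing of $\omega$ is precisely what supplies it near the origin; the linear term is part of the steep integrable Hamiltonian, not an obstruction, and the plain steep Nekhoroshev estimate of Theorem~\ref{thmNek} applies without modification. The Diophantine property of $\omega$ is used only in the normal-form construction, not in the confinement step.
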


The positive constants $r_*$ and $C$ of the double exponential stability depend only on $n$, $\gamma$ $\tau$, $\alpha$, $L$, $||H||_{\alpha,L,B}$ and on some constants characterizing the condition~\eqref{condG}.
 
It remains to explain in which sense~\eqref{condG} is generic. As we proved in~\cite{BFN15} (this will be recalled in \S\ref{s3}, Proposition~\ref{propNekho2}),  $SS(n,m_0)$ is an open set of full Lebesgue measure (and hence dense) in $P_2(n,m_0)$ (more precisely, its complement is a semi-algebraic subset of $P_2(n,m_0)$ of positive codimension). Using this, we proved in \cite{BFN15} that the Birkhoff normal form  of order $m_0$ of a Hamiltonian system at a non resonant elliptic fixed point is in general stably steep (\cite{BFN15}, Theorem C). In a similar fashion, we have here that {\it for any $H \in \mathcal{H}^\alpha_L(\omega)$ with $\omega$ satisfying~\eqref{dio}, for an open set of full Lebesgue measure of $Q \in P_2(n,m_0)$, the condition \eqref{condG} holds true for the modified Hamiltonian $H_Q \in \mathcal{H}^\alpha_L(\omega)$ defined by $H_Q(\theta,I):=H(\theta,I)+Q(I)$.} Indeed, the Diophantine condition on $\o$ is necessary just to be able to perform the Birkhoff normal form reduction up to order $m_0$ since we are dealing here with an invariant torus while in \cite{BFN15} we were interested with fixed points. Apart from this, the proof that the condition \eqref{condG} holds for $H_Q$  for an open set of full Lebesgue measure of $Q \in P_2(n,m_0)$ follows exactly  the same steps as in the fixed point case (see \cite{BFN15} Section \S 2.2). Hence we get the following corollary of Theorem \ref{Th1}.

\begin{Coro}\label{coro1} In the space of Hamiltonians $\mathcal{H}^\alpha_L(\omega)$ with $\omega$ satisfying~\eqref{dio}, the set of Hamiltonians for which ${\mathcal T}_{\o}$ is doubly exponentially stable with exponent $\frac{1}{\alpha(1+\tau)}$ contains an open dense and prevalent subset. 
\end{Coro}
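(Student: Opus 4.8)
\bigskip

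The plan is to deduce the corollary from Theorem~\ref{Th1} together with the genericity of stable steepness recalled in Proposition~\ref{propNekho2}. I would set
$\mathcal{S}:=\{H \in \mathcal{H}^\alpha_L(\omega)\ :\ H_{m_0} \in SS(n,m_0)\}$ with $m_0=[n^2/2+2]$; by Theorem~\ref{Th1} every $H \in \mathcal{S}$ has $\mathcal{T}_\omega$ doubly exponentially stable with exponent $\frac{1}{\alpha(1+\tau)}$, so it suffices to prove that $\mathcal{S}$ is open, dense and prevalent in $\mathcal{H}^\alpha_L(\omega)$.

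For openness I would argue that the Birkhoff polynomial map $\mathcal{B}_{m_0}\colon H \mapsto H_{m_0}$ is continuous (in fact real-analytic) from $\mathcal{H}^\alpha_L(\omega)$ to the finite-dimensional space $P_2(n,m_0)$. The coefficients of $H_{m_0}$ are produced from the Taylor--Fourier coefficients of $H$ by finitely many algebraic operations and by solving the cohomological equations $\omega\cdot\partial_\theta\chi=g$ at each degree $j\le m_0$; this last operation is bounded precisely because $\omega$ satisfies~\eqref{dio}, which is exactly where the Diophantine hypothesis enters (just as in the construction of~\eqref{BNFe}). Since $SS(n,m_0)$ is open in $P_2(n,m_0)$, the set $\mathcal{S}=\mathcal{B}_{m_0}^{-1}(SS(n,m_0))$ is open.

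The heart of the matter is to understand $\mathcal{B}_{m_0}$ along the finite-dimensional family $H\mapsto H_Q:=H+Q$, $Q\in P_2(n,m_0)$ (with $Q$ regarded as a $\theta$-independent element of $G^\alpha_L(\T^n\times B)$). Running the normalization degree by degree, one checks that adding the homogeneous component $Q_j$ of $Q$ changes the degree-$j$ Birkhoff coefficient by exactly $Q_j$, plus a correction depending only on $Q_2,\dots,Q_{j-1}$ and on $H$: being already $\theta$-independent, $Q_j$ passes untouched to the $\theta$-average, while it affects higher degrees only through Poisson brackets that strictly raise the degree. Hence $Q\mapsto (H_Q)_{m_0}$ is a polynomial automorphism of $P_2(n,m_0)$, unipotent and triangular for the grading by degree; its Jacobian is identically $1$, so it preserves Lebesgue measure. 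This computation is the only genuinely technical point, and it is carried out exactly as in~\cite[\S2.2]{BFN15} in the fixed-point case (the only difference being that the reduction up to order $m_0$ now uses~\eqref{dio}); I expect this bookkeeping to be the main thing to make precise.

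It then follows that $\mathcal{G}_H:=\{Q\in P_2(n,m_0)\ :\ (H_Q)_{m_0}\in SS(n,m_0)\}$, being the preimage of $SS(n,m_0)$ under a measure-preserving polynomial automorphism, is open, of full Lebesgue measure, and has complement a semi-algebraic set of positive codimension. Density of $\mathcal{S}$ is then immediate: given $H$ and any $\varepsilon>0$, choose $Q\in\mathcal{G}_H$ with $||Q||$ small enough that $||H_Q-H||_{\alpha,L,B}<\varepsilon$, so $H_Q\in\mathcal{S}$. Prevalence follows by taking $P_2(n,m_0)$ --- embedded in $G^\alpha_L(\T^n\times B)$ as the space of $\theta$-independent polynomials in $I$ without constant or linear part --- as a finite-dimensional probe space: for every $H\in\mathcal{H}^\alpha_L(\omega)$ and Lebesgue-almost every $Q$ in this subspace, $H+Q\in\mathcal{S}$. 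Combined with Theorem~\ref{Th1}, this shows that the set of doubly exponentially stable Hamiltonians in $\mathcal{H}^\alpha_L(\omega)$ contains the open, dense and prevalent set $\mathcal{S}$.
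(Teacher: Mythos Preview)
Your proposal is correct and follows essentially the same route as the paper: define $\mathcal{S}=\{H:H_{m_0}\in SS(n,m_0)\}$, invoke Theorem~\ref{Th1}, and show $\mathcal{S}$ is open, dense and prevalent using $P_2(n,m_0)$ as the finite-dimensional probe space, with the key input being that for every $H$ the set of $Q$ with $(H_Q)_{m_0}\in SS(n,m_0)$ has full Lebesgue measure (the paper cites \cite{BFN15}, \S2.2 for this, and you unpack the triangular/unipotent structure of $Q\mapsto (H_Q)_{m_0}$ that underlies it). The only cosmetic difference is that the paper deduces density from prevalence in one line (a shy set has empty interior), whereas you argue density directly by picking a small $Q\in\mathcal{G}_H$; both are fine.
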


\begin{proof} It follows directly from Theorem \ref{Th1} and the fact that the subspace of $\mathcal{H}^\alpha_L(\omega)$ for which~\eqref{condG} holds is open, dense and prevalent. Indeed, prevalence follows form the fact that~\eqref{condG} holds for $H_Q$ for a set of full Lebesgue measure of $Q \in P_2(n,m_0)$  (see \cite{HK10} for a survey on the notion of prevalence). Openness follows from the fact that~\eqref{condG} is an open condition, and density then follows from prevalence.   
\end{proof}

\bigskip

Now let us consider our second setting, that is we consider a Hamiltonian as in~\eqref{H2}. 

\begin{definition} We say that a completely integrable Hamiltonian $h\in G^\alpha_L(\bar{D})$ is {\it KAM doubly exponentially stable with exponent $u$} if for any $f \in G^\alpha_L(\T^n \times \bar{D})$, and for any $\gamma>0$, the following holds:  there exists $\varepsilon_*>0$ such that for any $\varepsilon\leq \varepsilon_*$, there   exists, for the Hamiltonian flow of $H_\varepsilon$ as in~\eqref{H2}, a set of invariant Lagrangian Diophantine tori $\mathcal{K}' \subseteq \T^n \times \bar{D}$ whose complement in $\T^n \times \bar{D}$ has a measure of order $\gamma$, and such that every torus in $\mathcal{K}' $ is doubly exponentially stable with exponent $u$.
\end{definition}

Given a smooth function $h$ defined on the closure $\bar{D}$ of some open bounded domain $D \subseteq \R^n$, for any $I\in \bar{D}$ and any integer $m\geq 2$, we define the Taylor polynomial $T_I^m h \in P_2(n,m)$ (starting at order 2) by
\[ T_I^m h(X)=\sum_{l=2}^{m}(l!)^{-1}\nabla^l h(I).(X)^l, \quad X \in \R^n\]
and recall the following definition.

\begin{definition}\label{defK}
Let us define by $N(n,2)$ the subset of $P_2(n,2)$ consisting of non-degenerate quadratic forms. A smooth function $h : \bar{D} \rightarrow \R$ is said to be Kolmogorov non-degenerate if for all $I \in \bar{D}$, $T_I^2h \in N(n,2)$. Upon restricting $\bar{D}$ if necessary, we may always assume that its gradient $\nabla h: \bar{D} \rightarrow \bar{\Omega}$ is a diffeomorphism onto its image. 
\end{definition}

Our second main result is then as follows.   

\begin{Main}\label{Th2} Let $h \in G^\alpha_L(\bar{D})$ such that for all $I \in \bar{D}$,
\begin{equation}\label{condK}\tag{K}
T_I^{2} h \in N(n,2)
\end{equation}
and
\begin{equation}\label{condS}\tag{S}
T_I^{m_0} h \in SS(n,m_0).
\end{equation}
Then $h$ is KAM doubly exponentially stable with exponent $u$, for any $u<\frac{1}{\a n}$.
\end{Main}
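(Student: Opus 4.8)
The plan is to reduce Theorem~\ref{Th2} to Theorem~\ref{Th1}, applied with \emph{uniform} constants to every torus of the KAM family produced by the classical Kolmogorov--Arnold--Moser theorem, following the strategy used in \cite{MG95b, GM97} in the quasi-convex case. The one substantial point is to check that the Birkhoff polynomials of the perturbed system at those tori are still stably steep; this will follow because they are $\varepsilon$-close to the Taylor polynomials $T_I^{m_0}h$, which are stably steep by~\eqref{condS}.

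Fix $f \in G^\alpha_L(\T^n \times \bar D)$, fix $\gamma>0$, and let $u<\frac{1}{\alpha n}$ be given. Since $\tau \mapsto \frac{1}{\alpha(1+\tau)}$ is continuous and takes the value $\frac{1}{\alpha n}$ at $\tau=n-1$, fix $\tau>n-1$ close enough to $n-1$ that $\frac{1}{\alpha(1+\tau)}\ge u$. By~\eqref{condK} the map $\nabla h:\bar D\to\bar\Omega$ is a diffeomorphism and $h$ is Kolmogorov non-degenerate, so the KAM theorem (\cite{Pos01} for $\alpha=1$, \cite{Pop04} for $\alpha>1$) applies: there is a constant $c>0$, depending on $h,f,n,\alpha,L,\tau$, such that for every $\varepsilon\le c\gamma^2$ the flow of $H_\varepsilon$ carries the family $\mathcal K=\bigcup_{\omega_*\in\Omega_{\gamma,\tau}}\mathcal T^\varepsilon_{\omega_*}$ of invariant Lagrangian tori, the torus $\mathcal T^\varepsilon_{\omega_*}$ having frequency $\omega_*$ (which satisfies~\eqref{dio} with the constants $\gamma,\tau$), with the complement of $\mathcal K$ in $\T^n\times\bar D$ of measure of order $\gamma$. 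Moreover, by the Gevrey (resp. analytic) normal form of \cite{Pop04} (resp. \cite{Pos01}) attached to this construction, for each $\omega_*\in\Omega_{\gamma,\tau}$ there is a Gevrey symplectic transformation $\Psi_{\omega_*}$ (depending also on $\varepsilon$), defined on $\T^n\times B_{\rho_1}$ where $B_{\rho_1}$ is the ball of some radius $\rho_1>0$ about the origin, mapping $\T^n\times\{0\}$ onto $\mathcal T^\varepsilon_{\omega_*}$, and such that $H_\varepsilon\circ\Psi_{\omega_*}\in\mathcal H^\alpha_{L'}(\omega_*)$ up to an additive constant; here $\rho_1$, $L'\ge1$ and a bound $M$ on $||H_\varepsilon\circ\Psi_{\omega_*}||_{\alpha,L',B_{\rho_1}}$ can all be chosen independent of $\omega_*\in\Omega_{\gamma,\tau}$ and of $\varepsilon\le c\gamma^2$, and $\varepsilon\mapsto H_\varepsilon\circ\Psi_{\omega_*}$ converges, in $G^\alpha_{L'}(\T^n\times B_{\rho_1})$ and uniformly in $\omega_*$, as $\varepsilon\to0$, to the integrable Hamiltonian $h$ written in coordinates centered at $I_*:=(\nabla h)^{-1}(\omega_*)$.

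Now I verify~\eqref{condG} for each localized Hamiltonian. Since~\eqref{condS} holds at every point of the compact set $\bar D$, since $I\mapsto T_I^{m_0}h$ is continuous, and since stable steepness is an open condition whose constants persist under small perturbations of the polynomial, there is a single triple $(\rho_0,C_0,\delta_0)$ of positive constants such that $T_I^{m_0}h$ is $(\rho_0,C_0,\delta_0)$-stably steep for every $I\in\bar D$. At $\varepsilon=0$ the Hamiltonian $h$, written in coordinates centered at $I_*$, is already in Birkhoff normal form, so its Birkhoff polynomial of order $m_0$ equals $T_{I_*}^{m_0}h$. The Birkhoff polynomial of order $m_0$ of a Hamiltonian in $\mathcal H^\alpha_{L'}(\omega_*)$ depends continuously, indeed polynomially through the cohomological equations solved with~\eqref{dio}, on the $m_0$-jet of the Hamiltonian at $\T^n\times\{0\}$, uniformly for $\omega_*\in\Omega_{\gamma,\tau}$; combined with the uniform convergence of $\varepsilon\mapsto H_\varepsilon\circ\Psi_{\omega_*}$ stated above, this gives $\varepsilon_1>0$ (depending on $\gamma$ and the data) such that for all $\varepsilon\le\varepsilon_1$ and all $\omega_*\in\Omega_{\gamma,\tau}$ the Birkhoff polynomial of order $m_0$ of $H_\varepsilon\circ\Psi_{\omega_*}$ lies within $\rho_0/2$ of $T_{I_*}^{m_0}h$ in $P_2(n,m_0)$, hence is $(\rho_0/2,C_0,\delta_0)$-stably steep, hence belongs to $SS(n,m_0)$. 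Setting $\varepsilon_*:=\min(c\gamma^2,\varepsilon_1)>0$, condition~\eqref{condG} holds for $H_\varepsilon\circ\Psi_{\omega_*}$ for every $\varepsilon\le\varepsilon_*$ and every $\omega_*\in\Omega_{\gamma,\tau}$.

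Theorem~\ref{Th1} then applies to each $H_\varepsilon\circ\Psi_{\omega_*}\in\mathcal H^\alpha_{L'}(\omega_*)$: the torus $\T^n\times\{0\}$, that is $\mathcal T^\varepsilon_{\omega_*}$ after conjugation by the symplectic map $\Psi_{\omega_*}$, is doubly exponentially stable with exponent $\frac{1}{\alpha(1+\tau)}\ge u$. The constants $r_*$ and $C$ supplied by Theorem~\ref{Th1} depend only on $n,\gamma,\tau,\alpha,L',\rho_1,M$ and on $(\rho_0/2,C_0,\delta_0)$, all independent of $\omega_*\in\Omega_{\gamma,\tau}$ and of $\varepsilon\le\varepsilon_*$, so they may be taken the same for every torus of $\mathcal K$, and double exponential stability is transported back through $\Psi_{\omega_*}$. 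Taking $\mathcal K':=\mathcal K$, we obtain, for every $\varepsilon\le\varepsilon_*$, a set of invariant Lagrangian Diophantine tori whose complement in $\T^n\times\bar D$ has measure of order $\gamma$, each doubly exponentially stable with exponent $u$; since $f$ and $\gamma$ were arbitrary and $\varepsilon_*>0$, this is exactly the statement that $h$ is KAM doubly exponentially stable with exponent $u$. The main obstacle is entirely quantitative. One must run Popov's Gevrey KAM theorem so that the normal form near $\mathcal T^\varepsilon_{\omega_*}$ lives on a ball of radius $\rho_1$ and has Gevrey norm bound $M$ that do not degenerate as $\omega_*$ approaches $\partial\Omega_{\gamma,\tau}$ or as $\varepsilon\to0$, and one must control the passage from the $m_0$-jet of $H_\varepsilon\circ\Psi_{\omega_*}$ to its Birkhoff polynomial --- a computation involving the small divisors $k\cdot\omega_*$ --- uniformly in $\omega_*$, with a loss that fits within the margin $\rho_0/2$ afforded by the compact-set stable-steepness estimate. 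Granted these inputs, which rest on the quantitative form of Popov's theorem and on the explicit dependence of the normalization~\eqref{BNFe} on the Hamiltonian and on $\omega_*\in{\rm Dio}_{\gamma,\tau}$, together with the compactness of $\Omega_{\gamma,\tau}$ and $\bar D$, Theorem~\ref{Th1} yields the conclusion.
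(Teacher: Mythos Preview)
Your argument is essentially correct as a strategy, but it takes a genuinely different route from the paper's proof. The paper does \emph{not} reduce Theorem~\ref{Th2} to Theorem~\ref{Th1}. Instead it applies Popov's \emph{simultaneous} Birkhoff normal form (Theorem~\ref{thmP}) once: a single $(\alpha,\beta)$-Gevrey symplectic map $\Psi$ conjugates $H_\varepsilon$ to $h_*(I)+f_*(\theta,I)$ with $f_*$ flat along the whole set $J_{\gamma,\tau}'=\omega^{-1}(\Omega_{\gamma,\tau}')$. The paper then checks, via condition~\eqref{condS} and Proposition~\ref{propsteep2}, that $h_*$ is steep on a neighborhood $V_{2r}J_{\gamma,\tau}'$, and applies the Nekhoroshev estimate (Theorem~\ref{thmNek}) directly on that neighborhood. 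No individual torus-by-torus normal form is performed, and Theorem~\ref{Th1} is not invoked.

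Your approach is more modular --- reducing the KAM statement to the single-torus statement --- but it is costlier and leaves more to verify. Concretely, you stack two layers of normal form: first a KAM conjugacy $\Psi_{\omega_*}$ putting $H_\varepsilon$ into the form~\eqref{H1} near each torus, and then, hidden inside your call to Theorem~\ref{Th1}, the Mitev--Popov normal form (Theorem~\ref{thmMP}) for that single torus. The paper's route collapses these into one step via Popov's simultaneous construction, which automatically delivers the uniformity in $\omega_*$ that you must argue separately. Your honest closing paragraph identifies exactly the price of your approach: you need (i) that each $\Psi_{\omega_*}$ lands $H_\varepsilon$ in $\mathcal H^\alpha_{L'}(\omega_*)$ with $L'$, the domain radius $\rho_1$, and the norm bound $M$ uniform over $\omega_*\in\Omega_{\gamma,\tau}$, and (ii) uniform control of the finite-order Birkhoff map $(\text{$m_0$-jet of }H)\mapsto H_{m_0}$ over all $\omega_*\in\mathrm{Dio}_{\gamma,\tau}$. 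Both are true --- (ii) because only finitely many small divisors $k\cdot\omega_*$ with $|k|\le m_0$ enter, all bounded below by $\gamma m_0^{-\tau}$ --- but neither is entirely free, and (i) in particular is most cleanly extracted from Popov's theorem itself, at which point one may as well use the simultaneous normal form directly as the paper does.
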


As mentioned in the introduction, under the Kolmogorov non-degeneracy condition on $h$, invariant tori with frequency in $\Omega_{\gamma,\tau}$ (see \eqref{def.Omega}) are preserved, being only slightly deformed, by an arbitrary $\varepsilon$-perturbation, provided $\varepsilon$ is sufficiently small. For Gevrey systems,  this was proved by Popov in \cite{Pop04}. It is only the second part of the statement (double exponential stability of the set of invariant tori) which is new, and the condition~\eqref{condS} is only required for this part of the statement. Under this condition, we will see in the proof of Theorem \ref{Th2} that for any fixed $\tau>n-1$, for any $\gamma>0$ and for $\varepsilon$ sufficiently small, the tori in the set of KAM invariant tori 
 \begin{equation} \label{def.tOmega} \mathcal K'=\bigcup_{\omega_{*} \in \Omega_{\gamma,\tau}'}\mathcal{T}_{\omega_{*}}^\varepsilon \end{equation}
are doubly exponentially stable with exponent $\frac{1}{\a(\tau+1)}$. 
Since $\tau$ can be chosen to be any number strictly smaller than $n-1$, we can reach any exponent $u$ with $u<\frac{1}{\a n}$. Here, we restricted, following \cite{Pop04}, the frequencies to the subset $\Omega_{\gamma,\tau}'$ of vectors in $\Omega_{\gamma,\tau}$ which have positive Lebesgue density in $\Omega_{\gamma,\tau}$; that is $\omega \in \Omega_{\gamma,\tau}'$ if for any neighborhood $O$ of $\omega$ in $\Omega$, the Lebesgue measure of $O \cap \Omega_{\gamma,\tau}$ is positive. No measure is lost due to this restriction since the sets $\Omega_{\gamma,\tau}$ and $\Omega_{\gamma,\tau}'$ have the same Lebesgue measure.

For every fixed couple $(\gamma,\tau)$, the positive constants $\varepsilon_*$, $r_*$ and $C$, characterizing the smallness of the perturbation as well as the constants that appear in the double exponential stability of the tori in $\mathcal K'$ of \eqref{def.tOmega},  depend only on $n$, $\gamma$, $\tau$, $\alpha$, $L$, $||h||_{\alpha,L,\bar{D}}$, $||f||_{\alpha,L,\bar{D}}$ and on some uniform constants characterizing the conditions~\eqref{condK} and~\eqref{condS}. The latter uniform constants can be obtained using  the compactness of $\bar{D}$ and  the fact that $N(n,2)$ and $SS(n,m_0)$ are open.


As before, it remains to explain in which sense the conditions~\eqref{condK} and~\eqref{condS} are generic. In fact we have the following lemma. 

\begin{lemma} \label{lemma.fub}  Fix $\gamma>0$. Given any $h \in G^\alpha_L(\bar{D})$, for an open set of full Lebesgue measure of $Q \in P_2(n,m_0)$, the modified integrable Hamiltonian $h_Q \in G^\alpha_L(\bar{D})$ defined by $h_Q(I):=h(I)+Q(I)$ satisfies~\eqref{condK} and~\eqref{condS} on a compact subset of $\bar{D}$ whose complement in $\bar{D}$ has a measure less than $\gamma$.
\end{lemma}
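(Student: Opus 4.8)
The plan is to establish the two genericity statements separately and then combine them via a Fubini-type argument (hence the label), exactly paralleling the argument for the fixed-point case in \cite{BFN15}, \S 2.2. First I would treat condition \eqref{condK}. For a fixed $I \in \bar D$, the map $Q \mapsto T_I^{m_0}(h_Q) = T_I^{m_0}h + T_I^{m_0}Q$ restricted to the quadratic part is an affine bijection of $P_2(n,2)$ onto itself (since $Q$ ranges over $P_2(n,m_0)$, its quadratic part ranges freely over $P_2(n,2)$); because $N(n,2)$, the set of non-degenerate quadratic forms, is open with complement of positive codimension (its complement is the zero set of the discriminant, a nontrivial polynomial), for each $I$ the set of $Q$ for which $T_I^2(h_Q) \notin N(n,2)$ is a proper algebraic subset, in particular of measure zero. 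The union over $I \in \bar D$ is however not obviously small, so this is where one restricts to a compact subset: one shows that the ``bad'' set $\{(I,Q) : T_I^2(h_Q) \notin N(n,2)\}$ is closed in $\bar D \times P_2(n,m_0)$ (using continuity of Taylor coefficients in $I$ and the closedness of the complement of $N(n,2)$), apply Fubini to the indicator of this set to conclude that for a.e.\ $Q$ the slice $\{I : T_I^2(h_Q)\notin N(n,2)\}$ has measure zero in $\bar D$, and then use regularity of the measure to extract a compact subset of $\bar D$ of complement-measure $<\gamma/2$ on which \eqref{condK} holds; openness of $N(n,2)$ plus compactness then gives the uniform constants.

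Next I would treat condition \eqref{condS} in the same spirit, invoking Proposition~\ref{propNekho2} (recalled from \cite{BFN15}), which states that $SS(n,m_0)$ is an open subset of $P_2(n,m_0)$ of full Lebesgue measure whose complement is a semi-algebraic set of positive codimension. The key point is again an affine-slicing argument: for fixed $I$, the map $Q \mapsto T_I^{m_0}(h_Q) = T_I^{m_0}h + T_I^{m_0}Q$ is an affine bijection of $P_2(n,m_0)$, and since a positive-codimension semi-algebraic set has Lebesgue measure zero and this property is invariant under affine bijections, for each fixed $I$ the set of $Q$ with $T_I^{m_0}(h_Q) \notin SS(n,m_0)$ has measure zero. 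One then checks that the complement of $SS(n,m_0)$ being closed (it is the complement of an open set), the bad set $\{(I,Q): T_I^{m_0}(h_Q) \notin SS(n,m_0)\}$ is closed in $\bar D \times P_2(n,m_0)$, applies Fubini again, extracts a compact subset of $\bar D$ of complement-measure $<\gamma/2$, and uses openness of $SS(n,m_0)$ together with compactness to get the uniform stable-steepness constants $(\rho,C,\delta)$.

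Finally I would intersect: for $Q$ outside the (measure-zero, and in fact open-full-measure after taking interiors) exceptional sets coming from both conditions, take the intersection of the two compact subsets of $\bar D$; its complement in $\bar D$ has measure $<\gamma/2+\gamma/2=\gamma$, and on it both \eqref{condK} and \eqref{condS} hold with uniform constants. Openness of the full set of good $Q$ follows since the conditions are open and one can shrink the compact set slightly; full Lebesgue measure is what the Fubini argument delivers. The main obstacle I anticipate is purely bookkeeping rather than conceptual: verifying that the two ``bad'' sets are genuinely closed in $\bar D \times P_2(n,m_0)$ so that Fubini applies to a measurable (indeed Borel) indicator, and carefully arranging the $\gamma/2 + \gamma/2$ split together with the compactness-based extraction of the \emph{uniform} constants needed later for Theorem~\ref{Th2} — the semi-algebraic structure of the complement of $SS(n,m_0)$ is what guarantees the requisite measure-zero-on-slices property, and that is already supplied by Proposition~\ref{propNekho2}, so no new real-algebraic-geometry input is required.
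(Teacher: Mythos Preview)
Your proposal is correct and follows essentially the same route as the paper's proof: both arguments fix $I$, observe that the bad $Q$'s form a null set (via the affine translation $Q\mapsto T_I^{m_0}h+Q$ and the semi-algebraic structure of the complements of $N(n,2)$ and $SS(n,m_0)$), apply Fubini to conclude that for almost every $Q$ the bad $I$'s form a null set, and then extract a compact subset of $\bar D$ with complement of measure less than $\gamma$; openness is obtained from the openness of $N(n,2)$ and $SS(n,m_0)$ plus compactness. The only cosmetic differences are that the paper treats the two conditions simultaneously rather than splitting $\gamma$ as $\gamma/2+\gamma/2$, and that you are more explicit than the paper about the measurability hypothesis needed for Fubini (your observation that the bad set is closed is correct and fills a detail the paper leaves implicit).
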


It immediately follows that the set of $Q  \in P_2(n,m_0)$ such that for any $\gamma>0$, $h_Q$  satisfies~\eqref{condK} and~\eqref{condS} (with constants that depend of course on $\gamma$) on a compact subset of $\bar{D}$ whose complement in $\bar{D}$ has  measure less than $\gamma$, contains a residual and prevalent set. Hence we get the following corollary of Theorem~\ref{Th2} and Lemma~\ref{lemma.fub}.

\begin{Coro}\label{coro2}  For a residual and prevalent set of  integrable Hamiltonians $h\in  G^\alpha_L(\bar{D})$, $h$ is  {\it KAM doubly exponentially stable with exponent $u$} for any $u<\frac{1}{\a n}$.
\end{Coro}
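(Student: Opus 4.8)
\textbf{Proof proposal for Corollary~\ref{coro2}.} The plan is to deduce the corollary from Lemma~\ref{lemma.fub} by a standard Fubini-type argument, exactly parallel to the one sketched before Corollary~\ref{coro1}. The statement to be established is that a residual and prevalent subset of $G^\alpha_L(\bar{D})$ consists of Hamiltonians $h$ which, for every $\gamma>0$, satisfy~\eqref{condK} and~\eqref{condS} on a compact subset of $\bar{D}$ whose complement has measure less than $\gamma$; Theorem~\ref{Th2} then upgrades this to KAM double exponential stability with any exponent $u<\tfrac{1}{\alpha n}$, since for such an $h$ one may apply Theorem~\ref{Th2} on each such compact subset and let $\gamma\to 0$. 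Concretely, define, for fixed $\gamma>0$, the set $\mathcal{G}_\gamma\subseteq G^\alpha_L(\bar{D})$ of those $h$ for which~\eqref{condK} and~\eqref{condS} hold on a compact $K\subseteq\bar D$ with $\mathrm{Leb}(\bar D\setminus K)<\gamma$, and set $\mathcal{G}:=\bigcap_{j\geq 1}\mathcal{G}_{1/j}$. The target class is exactly $\mathcal{G}$.

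First I would check that each $\mathcal{G}_\gamma$ is \emph{prevalent}. By Lemma~\ref{lemma.fub}, for any fixed $h$ the set of $Q\in P_2(n,m_0)$ with $h_Q\in\mathcal{G}_\gamma$ has full Lebesgue measure in the finite-dimensional space $P_2(n,m_0)$; since $P_2(n,m_0)\hookrightarrow G^\alpha_L(\bar D)$ (polynomials are Gevrey-$\alpha$ for any $\alpha\geq1$), this finite-dimensional subspace is a valid \emph{probe space} in the sense of prevalence, and the complement of $\mathcal{G}_\gamma$ is shy (see \cite{HK10}). A countable intersection of prevalent sets is prevalent, so $\mathcal{G}$ is prevalent; in particular $\mathcal{G}$ is dense. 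Second, I would address \emph{residuality}. Here there is a subtlety: the conditions~\eqref{condK} and~\eqref{condS} are pointwise conditions requiring membership in the open sets $N(n,2)$ and $SS(n,m_0)$, but only on a compact subset of full-up-to-$\gamma$ measure, not everywhere. I would argue that $\mathcal{G}_\gamma$ contains an open set: if $h\in\mathcal{G}_\gamma$ realizes the compact $K$ with $\mathrm{Leb}(\bar D\setminus K)<\gamma$, then since $T^{2}_I h$ and $T^{m_0}_I h$ depend continuously (in the finite-dimensional target spaces) on finitely many derivatives of $h$ at $I$, which in turn are continuous in the $G^\alpha_L$-norm uniformly over the compact $K$, and since $N(n,2)$ and $SS(n,m_0)$ are open, there is a $G^\alpha_L$-neighborhood of $h$ all of whose elements still satisfy~\eqref{condK} and~\eqref{condS} on the \emph{same} $K$; hence $\mathcal{G}_\gamma$ is open. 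Combined with density (from prevalence), each $\mathcal{G}_\gamma$ is open and dense, so $\mathcal{G}=\bigcap_j\mathcal{G}_{1/j}$ is residual. Thus $\mathcal{G}$ is simultaneously residual and prevalent, and every $h\in\mathcal{G}$ is KAM doubly exponentially stable with exponent $u$ for all $u<\tfrac{1}{\alpha n}$ by Theorem~\ref{Th2}, which is the claim.

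The main obstacle I anticipate is the openness step, namely making precise that membership in $\mathcal{G}_\gamma$ is preserved on a \emph{fixed} compact set under small $G^\alpha_L$-perturbations. This requires that the map $h\mapsto (I\mapsto (T^2_I h, T^{m_0}_I h))$ be continuous from $G^\alpha_L(\bar D)$ into $C^0(K, P_2(n,2)\times P_2(n,m_0))$, which follows from the fact that $\|H\|_{\alpha,L,\bar D}$ controls all partial derivatives $\partial_I^l h$ up to order $m_0$ uniformly on $\bar D$ (indeed $|\partial_I^l h(I)|\leq \|h\|_{\alpha,L,\bar D}\,L^{|l|}l!^{\alpha}$), together with the compactness of $K$ and the openness of $N(n,2)$ and $SS(n,m_0)$; here one uses that the ``stably steep'' condition is robust by design (Definition~\ref{stabsteep} builds in a $\rho$-ball of stability). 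The remaining ingredients --- the Fubini/prevalence bookkeeping and the passage $\gamma\to0$ in Theorem~\ref{Th2} --- are routine.
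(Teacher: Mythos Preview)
Your proposal is correct and follows essentially the same route the paper sketches: the paper derives Corollary~\ref{coro2} ``immediately'' from Lemma~\ref{lemma.fub} and Theorem~\ref{Th2}, and you have simply written out the bookkeeping (prevalence via the probe space $P_2(n,m_0)$, openness of each $\mathcal{G}_\gamma$ in $G^\alpha_L(\bar D)$ from the openness of $N(n,2)$ and $SS(n,m_0)$ together with uniform control of finitely many derivatives by the Gevrey norm, residuality from the countable intersection). The one point you pass over quickly---and the paper does too---is that Theorem~\ref{Th2} is stated for conditions \eqref{condK} and \eqref{condS} on \emph{all} of $\bar D$, whereas here they hold only on a compact $K$; to make ``apply Theorem~\ref{Th2} on each such compact subset'' literal you should note that by openness of \eqref{condK} and \eqref{condS} they persist on a small neighborhood $\bar{D}'\supseteq K$, so Theorem~\ref{Th2} applies on $\bar{D}'$ and the extra measure lost is at most $\mathrm{Leb}(\bar D\setminus K)<\gamma$.
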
  

Let us now give the proof of Lemma \ref{lemma.fub}. 

\begin{proof}[Proof of Lemma \ref{lemma.fub}]
Let us denote by $G_\gamma$ the set of $Q \in P_2(n,m_0)$ such that $h_Q$ satisfies both conditions~\eqref{condK} and~\eqref{condS} on a compact subset of $\bar{D}$ whose complement in $\bar{D}$ has a measure less than $\gamma$. What we need to prove is that $G_\gamma$ is both open and has full Lebesgue measure. 

Since the sets $N(n,2)$ and $SS(n,m_0)$ are open, by a compactness argument the set $G_\gamma$ is open, so it remains to prove that $G_\gamma$ has full Lebesgue measure.

Let us define the subset $KS \subset \bar{D} \times P_2(n,m_0)$ consisting of couples $(I,Q)$ for which the modified Hamiltonian $h_Q$ satisfies both conditions~\eqref{condK} and~\eqref{condS} at the point $I$. Let us also define the sections
\[ KS(I):=\{Q \in P_2(n,m_0) \; | \; (I,Q) \in KS \}, \quad KS(Q):=\{I \in \bar{D} \; | \; (I,Q) \in KS\} . \] 
For any $I \in \bar{D}$, we claim that the set $KS(I)$ has full Lebesgue measure. Indeed, it is obvious that the set of $Q  \in P_2(n,m_0)$ for which condition~\eqref{condK} is satisfied at $I$ has full Lebesgue measure while the fact that the set of $Q  \in P_2(n,m_0)$ for which condition~\eqref{condS} is satisfied at $I$ has full Lebesgue measure follows from~\cite{BFN15}. By Fubini's theorem, there exists a subset $G \subset P_2(n,m_0)$ of full Lebesgue measure such that for any $Q \in G$, the set $KS(Q)$ has full Lebesgue measure, that is $h_Q$ satisfies both conditions~\eqref{condK} and~\eqref{condS} at almost all points $I \in \bar{D}$. In particular, for any $Q \in G$ and for any $\gamma>0$, $h_Q$ satisfies both conditions~\eqref{condK} and~\eqref{condS} on a compact subset of $\bar{D}$ whose complement in $\bar{D}$ has a measure less than $\gamma$. This shows that $G$ is contained in $G_\gamma$ (in fact, $G$ is contained in the intersection of the $G_\gamma$ over $\gamma>0$), and therefore $G_\gamma$ has full Lebesgue measure.
\end{proof}

\subsection{Plan of the proofs}

The proofs of Theorem~\ref{Th1} and Theorem~~\ref{Th2}, although independent from each other, will follow the same path, which was the one also taken in~\cite{BFN15}. Let us first start with the case of a single Lagrangian invariant Diophantine torus.

The first step consists of constructing a Birkhoff normal form with an exponentially small remainder. In the analytic case, even though the Birkhoff formal series is expected to be divergent in general, its coefficients have a moderate growth of Gevrey type, so that on a ball of radius $r$, if one truncates the series at a large order $m \sim r^{-a}$, the non-integrable part in the normal form becomes exponentially small with respect to $m \sim r^{-a}$. In the case of an elliptic fixed point, these estimates have been proved but the calculations do not apply directly to the case of invariant tori. However a more general result (valid for Gevrey smooth systems) in the case of an invariant torus has been obtained by Mitev and Popov in~\cite{MP09}. Indeed, by Borel's theorem the formal Birkhoff series $h_{\infty}$ can be realized as the Taylor series at $I=0$ of some smooth function $H_*(I)$; it is proved in~\cite{MP09} that one can choose such a function $H_*$ in a Gevrey class, and this gives immediately the desired growth on the coefficients of $h_{\infty}$. More precisely, if $H$ is $\alpha$-Gevrey and $\omega$ satisfies~\eqref{dio}, then setting $\beta=\alpha(1+\tau)+1$, one can find a symplectic transformation $\Phi$ which is $(\alpha,\beta)$-Gevrey, such that $H \circ \Phi(\theta,I)=H_*(I)+R_*(\theta,I)$, where $H_*$ is $\beta$-Gevrey, $R_*$ is $(\alpha,\beta)$-Gevrey and flat at $I=0$. It follows that on a $r$-neighborhood of $I=0$, $R_*$ is exponentially small with respect to $r^{-(\beta-1)^{-1}}=r^{-\alpha^{-1}a}$. Now, if in addition, the Birkhoff polynomial $H_{m_0}$ is assumed to be stably steep, one can prove as in \cite{BFN15} that any sufficiently smooth function, whose Taylor expansion at some point coincides with $H_{m_0}$, is steep in a neighborhood of this point. By unicity of the Birkhoff polynomials, we then get that $H_*$ is steep in a neighborhood of the origin. So on a small $r$-neighborhood of $I=0$, $H \circ \Phi$ is an exponentially small (in $r$) perturbation of a steep integrable Hamiltonian: the double exponential stability follows by applying a version of Nekhoroshev's theorem in Gevrey classes.       

Now in the case of a family of KAM tori, under the Kolmogorov non-degeneracy condition~\eqref{condK}, Popov constructed in~\cite{Pop04} a simultaneous Birkhoff normal form for invariant tori with frequencies $\omega_* \in \Omega_{\gamma,\tau}'$: for $H_\varepsilon=h+\varepsilon f$, assuming that both $h$ and $f$ are $\alpha$-Gevrey, there exists a symplectic transformation $\Psi$ which is $(\alpha,\beta)$-Gevrey, such that $H_\varepsilon \circ \Psi(\theta,I)=h_*(I)+f_*(\theta,I)$, where $h_*$ is $\beta$-Gevrey, $f_*$ is $(\alpha,\beta)$-Gevrey, and $f_*$ is flat at every point $I \in (\nabla h_*)^{-1}(\Omega_{\gamma,\tau}')$. This immediately implies the persistence of invariant tori with frequencies in $\Omega_{\gamma,\tau}'$. Moreover, the derivatives of $h_*$ are close (with respect to $\varepsilon$) to the derivatives of $h$, hence the condition~\eqref{condS} also holds true with $h$ replaced by $h_*$, provided $\varepsilon$ is sufficiently small. This implies that $h_*$ is steep, while $f_*$ is exponentially small on a neighborhood of $\T^n \times (\nabla h_*)^{-1}(\Omega_{\gamma,\tau}')$: as before, the double exponential stability follows from Nekhoroshev theorem.     

The plan of the article is as follows. In \S\ref{s2}, we recall the results concerning the Birkhoff normal form for a Hamiltonian as in~\eqref{H1} and the simultaneous Birkhoff normal form for a Hamiltonian as in~\eqref{H2}, following~\cite{MP09} and~\cite{Pop04}. In \S\ref{s3}, we recall some results concerning stably steep polynomials, following  \cite{Nek73} and \cite{BFN15}. The proof of Theorem~\ref{Th1} and Theorem~\ref{Th2} will be given in \S\ref{s4}, using the results of \S\ref{s2} and \S\ref{s3}, and a version of Nekhoroshev estimates for perturbation of steep integrable Hamiltonians in Gevrey classes, that we state in Appendix~\ref{app}.  

\section{Birkhoff normal forms}\label{s2}

In the sequel, given some $\rho>0$ and some point $I_* \in \R^n$, we define $B_\rho(I_*)=V_\rho \{I_*\}$ the open ball of radius $\rho$ around $I_*$ in $\R^n$, and for $I_*=0\in \R^n$, we let $B_\rho=B_\rho(0)$.

\begin{theorem}\label{thmMP}
Let $H \in \mathcal{H}^\alpha_L(\omega)$ with $\omega$ satisfying~\eqref{dio}, and define $\beta:=\alpha(1+\tau)+1$. Then there exist positive constants $\bar{r}$, $L_1$, $L_2$ and $A$, which depend only on $n$, $\gamma$, $\tau$, $\alpha$, $L$ and $||H||_{\alpha,L,B}$ and a symplectic transformation
\[ \Phi : \T^n \times B_{\bar{r}} \rightarrow \T^n \times B \]
whose components belong to $G^{\alpha,\beta}_{L_1,L_2}(\T^n \times B_{\bar{r}})$, such that
\[ H \circ \Phi(\theta,I)=H_*(I)+R_*(\theta,I), \quad H_* \in G^{\beta}_{L_2}(B_{\bar{r}}), \quad R_* \in G^{\alpha,\beta}_{L_1,L_2}(\T^n \times B_{\bar{r}})  \]
and with the following properties:
\begin{itemize} 
\item[(1)] for any $(\theta,I) \in \T^n \times B_{\bar{r}}$ (resp. for any $(\theta',I') \in \Phi(\T^n \times B_{\bar{r}})$), we have $\Pi_I\Phi(\theta,I)=\mathrm{Id}+O\left(||I||^2\right)$ (resp. $\Pi_{I'}\Phi(\theta',I')=\mathrm{Id}+O\left(||I'||^2\right)$), where $\Pi_I$ (resp. $\Pi_{I'}$) denotes projection onto action space;
\item[(2)] the Taylor series of $H_*$ at $I=0$ is given by the Birkhoff formal series $H_{\infty}$;
\item[(3)] $\partial_I^l R_*(\theta,0)=0$ for all $\theta \in \T^n$ and all $l\in \N^n$: hence for any $r$ such that $0<2r < \bar{r}$,
\[ ||R_*||_{\alpha,\beta,L_1,L_2,B_{2r}} \leq A\exp\left(-(2L_2r)^{-\frac{1}{\alpha(1+\tau)}}\right). \]
\end{itemize}
\end{theorem}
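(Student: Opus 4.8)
The plan is to invoke the construction of Mitev and Popov~\cite{MP09} for the simultaneous Birkhoff normal form along a single Diophantine torus, and to extract from it the quantitative Gevrey estimates that yield the exponentially small remainder. First I would recall the formal Birkhoff normalization: since $\omega$ satisfies~\eqref{dio}, at every finite order $m$ the cohomological equations arising in the normalization of $H(\theta,I)=\omega\cdot I+O(\|I\|^2)$ can be solved, with a loss of a factor $\gamma^{-1}|k|^{\tau}$ on each Fourier mode $k$; iterating produces the formal series $H_\infty(I)$ and a formal symplectic transformation $\Phi_\infty$ with $H\circ\Phi_\infty=H_\infty$, and these are unique. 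The key point, which is the substance of~\cite{MP09}, is that the Diophantine small-divisor losses accumulate in a controlled way: the coefficients of $H_\infty$ and of $\Phi_\infty$ satisfy Gevrey-type bounds with exponent $\beta=\alpha(1+\tau)+1$ in the action variables. Concretely, by Borel's theorem one realizes the formal series $H_\infty$ as the Taylor series at $I=0$ of a genuine function, and the content of~\cite{MP09} is that this realization can be performed inside the Gevrey class $G^\beta_{L_2}(B_{\bar r})$ for suitable $\bar r, L_2$ depending only on $n,\gamma,\tau,\alpha,L,\|H\|_{\alpha,L,B}$; similarly $\Phi$ can be taken with components in $G^{\alpha,\beta}_{L_1,L_2}(\T^n\times B_{\bar r})$, close to the identity, and one checks that the normalizing transformation preserves the form $\Pi_I\Phi=\mathrm{Id}+O(\|I\|^2)$ because no constant or linear terms in $I$ are ever introduced. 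This gives properties~(1) and~(2) directly, and defines $R_*:=H\circ\Phi-H_*\in G^{\alpha,\beta}_{L_1,L_2}(\T^n\times B_{\bar r})$.

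Next I would establish the flatness statement~(3). By property~(2), the Taylor expansion of $H\circ\Phi(\theta,I)$ at $I=0$, which is independent of $\theta$ to all orders, coincides with the Taylor expansion of $H_*(I)$; hence all partial derivatives $\partial_I^l(H\circ\Phi-H_*)(\theta,0)$ vanish for every $l\in\N^n$ and every $\theta\in\T^n$, i.e. $\partial_I^l R_*(\theta,0)=0$. Then the exponentially small bound on $B_{2r}$ follows from the standard interpolation inequality for Gevrey functions vanishing to infinite order at a point: if $g\in G^{\alpha,\beta}_{L_1,L_2}$ with $\partial_I^l g(\theta,0)=0$ for all $l$, then on $B_{2r}$ one has, by Taylor's formula with integral remainder at order $N$ in $I$,
\[
|g(\theta,I)|\le \sup_{|l|=N}\|\partial_I^l g\|_{\alpha,\beta,L_1,L_2}\,L_2^{N}N!^{\beta}\,\frac{(2r)^{N}}{N!}\le \|g\|_{\alpha,\beta,L_1,L_2}\,(2L_2 r)^{N}N!^{\beta-1},
\]
and optimizing over $N$ (choosing $N\sim (2L_2 r)^{-1/(\beta-1)}$, legitimate for $2r<\bar r$ small) gives a bound of the form $A\exp\left(-(2L_2 r)^{-1/(\beta-1)}\right)$; since $\beta-1=\alpha(1+\tau)$ this is exactly the claimed estimate, with $A$ absorbing $\|R_*\|_{\alpha,\beta,L_1,L_2,B_{\bar r}}$ and the combinatorial constants, all controlled by the listed quantities. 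One should be slightly careful that the estimate is uniform in $\theta$ and that the implied derivative bounds of $R_*$ are themselves of the same exponentially small order, so that the full Gevrey norm $\|R_*\|_{\alpha,\beta,L_1,L_2,B_{2r}}$ — not merely the sup norm — satisfies the stated inequality; this again follows by applying the same interpolation to $\partial_\theta^k\partial_I^l R_*$ and using that these also vanish to infinite order in $I$ at $I=0$.

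The main obstacle is genuinely the construction underlying~\cite{MP09}: controlling the accumulation of Diophantine small divisors through infinitely many normalization steps so as to obtain the Gevrey exponent $\beta=\alpha(1+\tau)+1$, and, crucially, doing this for an invariant torus rather than an elliptic fixed point — in the torus case the angle variables $\theta\in\T^n$ are present from the outset and the normal form is not a power series in all variables, so the Borel-realization argument must be combined with a KAM-type iteration to kill the $\theta$-dependence order by order while keeping Gevrey control on the transformation. Since~\cite{MP09} carries this out, I would simply cite it for the existence of $\Phi$, $H_*$, $R_*$ with the stated regularity and with properties~(1)–(2), and then supply the short self-contained derivation of~(3) from the infinite-order flatness as above. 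Everything else is bookkeeping of constants, which by construction depend only on $n,\gamma,\tau,\alpha,L$ and $\|H\|_{\alpha,L,B}$.
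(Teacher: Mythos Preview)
Your proposal is correct and follows essentially the same route as the paper: cite \cite{MP09} for the existence of $\Phi$, $H_*$, $R_*$ with the stated Gevrey regularity and properties~(1)--(2), then derive the exponential smallness in~(3) from the infinite-order flatness of $R_*$ at $I=0$ via Taylor's formula and optimization in the truncation order (the paper phrases the optimization as an application of Stirling's formula, which is the same thing). The paper carries out the Taylor estimate directly on $\partial_\theta^k\partial_I^l R_*$ for all $k,l$, which immediately yields the full Gevrey norm bound; your remark that one must apply the interpolation to all such derivatives is exactly this point.
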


This statement is a direct consequence of Theorem 2 in \cite{MP09}, to which we refer for a proof. The statement $(2)$ follows by construction of $H_*$, whereas $(3)$ follows from Stirling's formula. Indeed, the assumption that $R_*$ is $(\alpha,\beta)$-Gevrey and flat at $I=0$, implies, by Taylor's formula, that for any $m\in\N$:
\[ |\partial_\theta^k\partial_I^lR_*(\theta,I)|\leq ||R_*||_{\alpha,\beta,L_1,L_2,\bar{B}_{\bar{r}}}L_1^{|k|}L_2^{|l|}k!^\alpha l!^\beta m!^{\beta-1}(L_2||I||)^m. \]
Choosing
\[ m\sim (L_2||I||)^{-\frac{1}{\beta-1}}=(L_2||I||)^{-\frac{1}{\alpha(1+\tau)}}, \]
by Stirling's formula we eventually obtain
\[ |\partial_\theta^k\partial_I^lR_*(\theta,I)|\leq AL_1^{|k|}L_2^{|l|}k!^\alpha l!^\beta \exp\left(-(L_2\||I||)^{-\frac{1}{\alpha(1+\tau)}}\right)  \]
where $A$ depends only on $||R_*||_{\alpha,\beta,L_1,L_2,\bar{B}_{\bar{r}}}$ and $n$. For $||I||<2r$, this proves $(3)$.

\bigskip

Les us now give an analogous statement for a Hamiltonian as in~\eqref{H2}. 

\begin{theorem}\label{thmP}
Let $H_\varepsilon$ be as in~\eqref{H2}, with $h \in G^\alpha_L(\bar{D})$ and $f \in G^\alpha_L(\T^n \times \bar{D})$, and fix $\gamma>0$. Assume that~\eqref{condK} is satisfied. Then there exist positive constants $\bar{\varepsilon}$, $L_1'$, $L_2'$, $A'$ and $E$, which depend only on $n$, $\sigma$, $\gamma$, $\tau$, $\alpha$, $L$, $||h||_{\alpha,L,\bar{D}}$, $||f||_{\alpha,L,\bar{D}}$ and on the condition~\eqref{condK}, such that for $\varepsilon \leq \bar{\varepsilon}$, there exists a symplectic transformation
\[ \Psi : \T^n \times D \rightarrow \T^n \times D \]
whose components belong to $G^{\alpha,\beta}_{L_1',L_2'}(\T^n \times D)$, with $\beta=\alpha(1+\tau)+1$, such that
\[ H \circ \Psi(\theta,I)=h_*(I)+f_*(\theta,I), \quad h_* \in G^{\beta}_{L_2'}(D), \quad f_* \in G^{\alpha,\beta}_{L_1',L_2'}(\T^n \times D)  \]
and a diffeomorphism $\omega : D \rightarrow \Omega$ whose components belong to $G^{\beta}_{L_2'}(D)$, with the following properties: 
\begin{itemize}
\item[(1)] $||\Psi-\mathrm{Id}||_{C^1(\T^n \times D)}\leq E\sqrt{\varepsilon}$ and $||\Psi^{-1}-\mathrm{Id}||_{C^1(\T^n \times D)}\leq E\sqrt{\varepsilon}$, where $||\,\cdot\,||_{C^1(\T^n \times D)}$ denotes the $C^1$-norm on $\T^n \times D$;
\item[(2)] $||\partial_I^l\omega(I)-\partial_I^l\nabla h(I)||\leq EL_2'^{|l|}l!^\beta \sqrt{\varepsilon}$ for all $I \in D$ and $l \in \N^n$;
\item[(3)] setting $J_{\gamma,\tau}':=\omega^{-1}(\Omega_{\gamma,\tau}')$, we have $\partial_I^l\omega(I)=\partial_I^l\nabla h_*(I)$ for all $I \in J_{\gamma,\tau}'$ and $l \in \N^n$;   
\item[(4)] we have $\partial_I^l f_*(\theta,I)=0$ for all $(\theta,I)\in \T^n \times J_{\gamma,\tau}'$ and all $l\in \N^n$: hence for any $r>0$ sufficiently small so that $V_{2r}J_{\gamma,\tau}' \subseteq D$,
\[ ||f_*||_{\alpha,\beta,L_1',L_2',V_{2r}J_{\gamma,\tau}'} \leq A'\exp\left(-(2L_2'r)^{-\frac{1}{\alpha(1+\tau)}}\right). \]
\end{itemize}
\end{theorem}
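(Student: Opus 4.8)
The plan is to deduce this statement from the simultaneous Gevrey KAM normal form constructed by Popov in~\cite{Pop04}, which supplies the core existence result, and then to read off the four quantitative properties from the estimates of that construction, finishing with a Stirling-type computation identical to the one already used for Theorem~\ref{thmMP}. First I would recall precisely what Popov's theorem yields under the Kolmogorov condition~\eqref{condK}: for $\varepsilon$ below a threshold $\bar\varepsilon$ depending only on the listed data, a symplectic map $\Psi$ of $\T^n\times D$ in the mixed Gevrey class $G^{\alpha,\beta}_{L_1',L_2'}$ with the sharp loss exponent $\beta=\alpha(1+\tau)+1$, together with a frequency diffeomorphism $\omega:D\to\Omega$ of class $G^\beta_{L_2'}$, such that $H_\varepsilon\circ\Psi=h_*+f_*$ with $h_*$ integrable and $\beta$-Gevrey, and $f_*$ of class $G^{\alpha,\beta}_{L_1',L_2'}$ flat along the fibres above the Diophantine set $J'_{\gamma,\tau}=\omega^{-1}(\Omega'_{\gamma,\tau})$. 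The exponent $\beta$ is exactly what the small-divisor analysis of the homological equations forces along the KAM iteration once divisors are controlled through~\eqref{dio} with exponent $\tau$; I would stress that the substantial input I am importing is the uniformity of all constants over the whole frequency range, obtained by running the iteration simultaneously for every $\omega_*\in\Omega'_{\gamma,\tau}$.

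Next I would extract the four properties. Properties (1) and (2) are the quantitative closeness statements: the transformation $\Psi$ and the modified frequency map $\omega$ differ from the identity, respectively from $\nabla h$, by a factor proportional to $\sqrt\varepsilon$, with Gevrey weights in (2); this $\sqrt\varepsilon$ scaling is precisely the size delivered by Popov's iteration under the smallness condition $\varepsilon\le c\gamma^2$, and I would simply transcribe his bounds, the $C^1$ control of $\Psi^{-1}$ following either from his estimates directly or from the inverse function theorem on the shrunk domain. Property (3), the identity $\partial_I^l\omega(I)=\partial_I^l\nabla h_*(I)$ on $J'_{\gamma,\tau}$ for all $l$, expresses that the normal form is complete over the persisting tori: since $f_*$ is flat above $J'_{\gamma,\tau}$, the torus $\{I=I_*\}$ with $I_*\in J'_{\gamma,\tau}$ carries, in the new coordinates, frequency $\nabla h_*(I_*)$, which by construction equals $\omega(I_*)$ together with all its derivatives; this is built into Popov's scheme and I would cite it.

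The last step is property (4) and its exponential consequence. The vanishing $\partial_I^l f_*(\theta,I)=0$ for all $(\theta,I)\in\T^n\times J'_{\gamma,\tau}$ and all $l\in\N^n$ is the defining flatness of the simultaneous normal form, which is what renders the tori over $\Omega'_{\gamma,\tau}$ invariant. From this flatness together with the $(\alpha,\beta)$-Gevrey control of $f_*$, the exponentially small bound on $V_{2r}J'_{\gamma,\tau}$ follows verbatim from the computation given after Theorem~\ref{thmMP}: Taylor's formula in the action variable around the nearest point of $J'_{\gamma,\tau}$ gives, for every truncation order $m$, a bound carrying the factor $m!^{\beta-1}(L_2'\,r)^m$, and choosing $m\sim(2L_2'r)^{-1/(\beta-1)}=(2L_2'r)^{-1/(\alpha(1+\tau))}$ and applying Stirling's formula yields the claimed estimate $||f_*||_{\alpha,\beta,L_1',L_2',V_{2r}J'_{\gamma,\tau}}\le A'\exp\bigl(-(2L_2'r)^{-1/(\alpha(1+\tau))}\bigr)$.

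The genuinely hard part of the statement is none of these four readings but the underlying construction itself, namely Popov's Gevrey KAM iteration with simultaneous control over the entire Diophantine frequency set and with the sharp loss $\beta=\alpha(1+\tau)+1$; since that is available from~\cite{Pop04}, the work left to me is organizational — matching his output to the normalization of~\eqref{H2} and carrying out the single Stirling estimate — so I expect no serious obstacle beyond faithfully transcribing his quantitative bounds.
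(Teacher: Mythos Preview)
Your proposal is correct and takes essentially the same approach as the paper: the paper simply states that this is exactly the content of Theorem~1.1 and Corollary~1.2 of~\cite{Pop04} and refers there for a proof, which is precisely what you do. Your additional discussion of the Stirling-type estimate behind property~(4) mirrors the argument the paper spells out after Theorem~\ref{thmMP}, so nothing is missing or inconsistent.
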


This is exactly the content of Theorem 1.1 and Corollary 1.2 of~\cite{Pop04}, to which we refer for a proof.

\section{Stably steep polynomials}\label{s3} 

In this section, we recall some results concerning stably steep polynomials, which were defined in Definition~\ref{stabsteep}. The first one states that stably steep polynomials of sufficiently high degree are generic, and was used in the justification of Corollary~\ref{coro1} and Corollary~\ref{coro2}.  

\begin{proposition}\label{propNekho2}
The complement of $SS(n,m_0)$ in $P_2(n,m_0)$ is contained in a closed semi-algebraic subset of codimension at least one. In particular, $SS(n,m_0)$ is an open dense set of full Lebesgue measure.
\end{proposition}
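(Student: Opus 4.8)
The statement asserts that the complement of $SS(n,m_0)$ in $P_2(n,m_0)$ is contained in a closed semi-algebraic set of positive codimension, and hence that $SS(n,m_0)$ is open, dense, and of full Lebesgue measure. Since this is stated in the excerpt as a recollection from \cite{BFN15}, the natural plan is to reduce it to the semi-algebraic machinery developed there, and I would organize the argument in three stages: (i) express the stable steepness condition via a first-order logic formula over the reals so that Tarski--Seidenberg applies; (ii) deduce that the complement is semi-algebraic and closed; (iii) show the complement has positive codimension by exhibiting an explicit stably steep polynomial and invoking the fact that a proper semi-algebraic subset of a connected semi-algebraic set of dimension $N$ that is not all of it, if it is closed with empty interior, has dimension $\le N-1$.

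\medskip

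\textbf{Step (i): semi-algebraic description.} I would first observe that, by Definition~\ref{stabsteep}, a polynomial $P_0$ fails to be stably steep if and only if for every triple of positive rationals $(\rho,C,\delta)$ there exist $P \in P_2(n,m_0)$ with $\|P-P_0\|<\rho$, an integer $l \in [1,n-1]$, an $l$-dimensional subspace $\Lambda \subseteq \R^n$, and a scale $0<\xi\le\delta$, such that $\max_{0\le\eta\le\xi}\min_{\|x\|=\eta,\,x\in\Lambda}\|\nabla P_\Lambda(x)\|\le C\xi^{m_0-1}$. The point is that all the quantifiers here range over semi-algebraic sets: the Grassmannian $G(l,n)$ is semi-algebraic (e.g. via orthonormal frames or projection matrices), the map $(\Lambda,P,x)\mapsto \nabla P_\Lambda(x)$ is polynomial, and the inner $\max_\eta\min_x$ over the compact semi-algebraic sets $\{0\le\eta\le\xi\}$ and $\{\|x\|=\eta\}\cap\Lambda$ defines a semi-algebraic function of $(\Lambda,P,\xi)$ by Tarski--Seidenberg. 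The subtle part is the quantifier "for every $(\rho,C,\delta)$": one must argue, as in \cite{BFN15}, that it can be replaced by a condition at a single well-chosen scale — morally, by homogeneity of $P_\Lambda$ one rescales $x\mapsto \xi x$ so that the inequality at scale $\xi$ becomes an inequality at scale $1$ with $C$ absorbed, and then the failure set becomes $\{P_0 : \exists\,\Lambda,\ \min_{\|x\|=1,x\in\Lambda}\|\nabla (P_0)_\Lambda(x)\| = 0 \text{ along some nested family}\}$, i.e. essentially the set where some restriction $(P_0)_\Lambda$ has a critical-type degeneracy. This collapses the infinitely many parameters and leaves a bona fide first-order formula.

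\medskip

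\textbf{Steps (ii)–(iii): closedness and codimension.} Once the complement $P_2(n,m_0)\setminus SS(n,m_0)$ is written by a quantified polynomial formula, Tarski--Seidenberg gives that it is semi-algebraic; that $SS(n,m_0)$ itself is open is immediate from Definition~\ref{stabsteep} (stable steepness is stable under small perturbations of $P_0$ by construction, as the constants $\rho,C,\delta$ can be shrunk), so the complement is closed. For positive codimension it suffices to show the complement has empty interior, equivalently that $SS(n,m_0)$ is dense; a closed semi-algebraic set with empty interior in $\R^N$ automatically has dimension $\le N-1$. Density I would get by the argument of \cite{BFN15}: the classical Nekhoroshev genericity results show that steepness (at a fixed scale, with fixed indices) holds off an algebraic hypersurface, and one checks that for $m_0 = [n^2/2+2]$ the degree is high enough that the "stable" (open, perturbation-robust) version is still generic — concretely, exhibit one explicit stably steep $P_0$ (e.g. a generic polynomial, or a suitable perturbation of $\sum x_i^{m_0}$) and note that stable steepness being open, together with the semi-algebraic structure, forces the bad set to be a proper closed semi-algebraic subset, hence of codimension $\ge 1$.

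\medskip

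\textbf{Main obstacle.} The genuinely delicate point is Step (i), the elimination of the quantifier over the continuum of thresholds $(\rho,C,\delta)$: one must verify that stable steepness is truly equivalent to a condition expressible with finitely many quantifiers over semi-algebraic sets — this is where the homogeneity rescaling and a compactness/uniformity argument (that the constants in the steepness inequality can be chosen uniformly once $P_0$ avoids a certain algebraic set) do the real work. The closedness and codimension claims are then comparatively soft consequences of Tarski--Seidenberg plus the openness of $SS(n,m_0)$. Since this proposition is quoted verbatim from \cite{BFN15}, in the paper itself I would simply cite that reference for the full details and only recall the statement.
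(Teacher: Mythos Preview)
Your bottom line---defer to \cite{BFN15} and merely recall the statement---is exactly what the paper does: it cites Proposition~A.2 and Proposition~A.3 of \cite{BFN15} with no argument reproduced. So your proposal matches the paper's approach.

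One caveat on the heuristic sketch you give in Step~(i): the polynomials in $P_2(n,m_0)$ are \emph{not} homogeneous (they carry terms of every degree from $2$ to $m_0$), so the rescaling $x\mapsto\xi x$ does not by itself collapse the quantifier over $\xi$ into a condition at a single scale as you suggest. The actual mechanism in \cite{BFN15} (following Nekhoroshev) is more involved and does not rely on homogeneity of $P_\Lambda$. Since you correctly flag this as the delicate point and explicitly defer to the reference for the details, this is an imprecision in the motivation rather than a gap in the proposal.
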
 

For a proof, we refer to Proposition A.2 and Proposition A.3 in~\cite{BFN15}. In order to state the next results, we first recall the definition of steep functions.

\begin{definition}\label{funcsteep}  
A differentiable function $h : G \rightarrow \R$ is steep on a domain $G' \subseteq G$ if there exist positive constants $C,\delta,p_l$, for any integer $l\in [1,n-1]$, and $\kappa$ such that for all $I \in G'$, we have $||\nabla h(I)|| \geq \kappa$ and, for all integer $l\in [1,n-1]$, for all vector space $\Lambda \in \R^n$ of dimension $l$, letting $\lambda=I+\Lambda$ the associated affine subspace passing through $I$ and $h_\lambda$ the restriction of $h$ to $\lambda$, the inequality
\[ \max_{0 \leq \eta \leq \xi}\;\min_{||I'-I||=\eta, \; I' \in \lambda}||\nabla h_\lambda(I')-\nabla h_\lambda(I)||>C\xi^{p_l} \]
holds true for all $0 < \xi \leq\delta$. We say that $h$ is $(\kappa,C,\delta,(p_l)_{l=1,\ldots,n-1})$-steep on $G'$ and, if all the $p_i=p$, we say that $h$ is $(\kappa,C,\delta,p)$-steep on $G'$.
\end{definition}

We have the following proposition, which states that if a smooth function has a stably steep Taylor polynomial at some point, then the function is steep on a neighborhood of this point.

\begin{proposition}\label{propsteep1}
Let $h : B_{\bar{r}} \rightarrow \R$ be a function of class $C^{m_0+1}$ such that $||\nabla h(0)||\geq\varpi>0$ and such that $T_0^{m_0}h$ is $(\rho',C',\delta')$-stably steep. Then for $r>0$ sufficiently small with respect to $\bar{r}, ||h||_{C^{m_0+1}(B_{\bar{r}})}, \rho', \varpi, m_0, C'$ and $\delta'$, the function $h$ is $(\kappa,C,\delta,m_0-1)$-steep on $B_{2r}$ with
\[ \kappa=\varpi/2, \quad C=C'/2, \quad \delta=r.  \] 
\end{proposition}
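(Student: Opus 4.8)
The plan is to deduce the steepness of $h$ at points $I\in B_{2r}$ from the stable steepness of the \emph{single} polynomial $P_0:=T_0^{m_0}h$ at the origin: the degree-$m_0$ Taylor polynomial of $h$ at a nearby point $I$ is a small perturbation of $P_0$, hence still satisfies the steepness-type estimate of Definition~\ref{stabsteep} (this is precisely what the word ``stably'' buys us), while the Taylor remainder for $\nabla h$ is negligible at the relevant scale.

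First I would collect the elementary consequences of the $C^{m_0+1}$ regularity. Continuity of $\nabla h$, with $\nabla^2h$ bounded on $B_{\bar r}$, gives $\|\nabla h(I)\|\ge\varpi/2=:\kappa$ for all $I\in B_{2r}$ once $r$ is small with respect to $\varpi$ and $\|h\|_{C^{m_0+1}(B_{\bar r})}$. For $I\in B_{2r}$ set $P_I:=T_I^{m_0}h\in P_2(n,m_0)$, the degree-$m_0$ Taylor polynomial of $h$ at $I$ taken from order $2$ on; since each coefficient $\tfrac1{j!}\nabla^jh(I)$, $2\le j\le m_0$, is Lipschitz on $B_{\bar r}$ with constant controlled by $\|h\|_{C^{m_0+1}(B_{\bar r})}$, one gets $\|P_I-P_0\|\le c(n,m_0)\,\|h\|_{C^{m_0+1}(B_{\bar r})}\,r$, so that $\|P_I-P_0\|<\rho'$ for all $I\in B_{2r}$ provided $r$ is small with respect to $\rho'$, $m_0$ and $\|h\|_{C^{m_0+1}(B_{\bar r})}$. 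Finally, Taylor's formula for $\nabla h$ at $I$ reads, for $I+x\in B_{\bar r}$,
\[ \nabla h(I+x)-\nabla h(I)=\nabla P_I(x)+R_I(x),\qquad \|R_I(x)\|\le M\|x\|^{m_0}, \]
where $\nabla P_I$ is the gradient of $P_I$ (it collects exactly the terms of degree $1$ through $m_0-1$ of the Taylor expansion of $\nabla h$ at $I$, with no constant term since $P_I\in P_2$) and $M$ depends only on $m_0$ and $\sup_{B_{\bar r}}\|\nabla^{m_0+1}h\|$.

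Now fix $I\in B_{2r}$, an integer $l\in[1,n-1]$, and an $l$-dimensional subspace $\Lambda\subseteq\R^n$; write $\lambda=I+\Lambda$ and parametrize $\lambda$ by $x\mapsto I+x$, $x\in\Lambda$. Denoting by $\pi_\Lambda$ the orthogonal projection onto $\Lambda$ and by $(P_I)_\Lambda$ the restriction of $P_I$ to $\Lambda$, we have $\nabla h_\lambda(I+x)-\nabla h_\lambda(I)=\pi_\Lambda\bigl(\nabla h(I+x)-\nabla h(I)\bigr)=\nabla(P_I)_\Lambda(x)+\pi_\Lambda R_I(x)$. Impose $r\le\delta'$ and fix $\xi$ with $0<\xi\le r=:\delta$; then $\xi\le\delta'$, and since $\|P_I-P_0\|<\rho'$ the stable steepness of $P_0$ applied to the admissible perturbation $P_I$ furnishes an $\eta^\ast\in[0,\xi]$ with $\min_{\|x\|=\eta^\ast,\,x\in\Lambda}\|\nabla(P_I)_\Lambda(x)\|>C'\xi^{m_0-1}$. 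For every such $x$, using $\eta^\ast\le\xi\le r$,
\[ \|\nabla h_\lambda(I+x)-\nabla h_\lambda(I)\|\ \ge\ \|\nabla(P_I)_\Lambda(x)\|-M\|x\|^{m_0}\ >\ C'\xi^{m_0-1}-M\xi^{m_0}\ \ge\ \tfrac{C'}{2}\xi^{m_0-1}, \]
the last inequality holding as soon as $r\le C'/(2M)$. Taking the minimum over $\|x\|=\eta^\ast$ and then the maximum over $\eta\in[0,\xi]$ gives the steepness inequality with constant $C=C'/2$ and exponent $m_0-1$, for all $0<\xi\le\delta$; together with $\|\nabla h(I)\|\ge\kappa$ this is exactly the assertion that $h$ is $(\kappa,C,\delta,m_0-1)$-steep on $B_{2r}$, once $r$ is also small enough that $B_{3r}\subseteq B_{\bar r}$. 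Collecting the conditions, $r$ must be small with respect to $\bar r$, $\|h\|_{C^{m_0+1}(B_{\bar r})}$, $\rho'$, $\varpi$, $m_0$, $C'$ and $\delta'$, as claimed.

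The computation is routine; the one place calling for care is the bookkeeping of the $\max$–$\min$ quantifiers: the ``good radius'' $\eta^\ast$ supplied by the stable steepness of $P_I$ for a given $\xi$ satisfies $\eta^\ast\le\xi$, so the remainder term $M(\eta^\ast)^{m_0}\le M\xi^{m_0}$ is smaller than the main term $C'\xi^{m_0-1}$ by a factor $\xi\le r$ and can be absorbed. The genuine input is the stability built into the hypothesis on $P_0$, which is what lets the lower bound survive the replacement of $P_0$ by the a priori different nearby polynomial $P_I$; without it one could only control $P_0$ itself, not the Taylor polynomials at the perturbed centers $I$.
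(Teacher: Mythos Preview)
Your proof is correct and follows essentially the same approach as the paper's: show $\|\nabla h(I)\|\ge\varpi/2$ by continuity, argue that $P_I=T_I^{m_0}h$ is close to $P_0$ so that the stable-steepness lower bound applies to $P_I$, and then absorb the order-$m_0$ Taylor remainder of $\nabla h$ into the main term. The only cosmetic difference is that the paper first phrases the closeness as ``$P_I$ is $(\rho'/2,C',\delta')$-stably steep'' before invoking the defining inequality, whereas you apply the stable steepness of $P_0$ directly to the admissible perturbation $P=P_I$; these are the same step.
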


This proposition is analogous to Theorem 2.2 of~\cite{BFN15}; however, the setting being slightly different we give the details.

\begin{proof}[Proof of Proposition~\ref{propsteep1}]
We let $M=||h||_{C^{m_0+1}(B_{\bar{r}})}$, and we assume $3r<\bar{r}$. Then observe that for $r$ sufficiently small with respect to $M$ and $\varpi$, we have 
\[ ||\nabla h(I)||\geq \kappa=\varpi/2, \quad I \in B_{2r}.  \]
Since $P_0:=T_0^{m_0}h$ is $(\rho',C',\delta')$-stably steep, for $r$ sufficiently small with respect to $M$ and $\rho'$, $P_I:=T_I^{m_0}h$ is $(\rho'/2,C',\delta')$-stably steep for any $I \in B_{2r}$. By definition, for any vector subspace $\Lambda \subseteq \R^n$ of dimension $1 \leq l \leq n-1$, letting $P_{I,\Lambda}$ be the restriction of $P_I$ to $\Lambda$, the inequality
\begin{equation}\label{bout1}
\max_{0 \leq \eta \leq \xi}\;\min_{||x||=\eta, \; x \in \Lambda}||\nabla P_{I,\Lambda}(x)||>C'\xi^{m_0-1}
\end{equation}
holds true for all $0 < \xi \leq\delta'$. Assume $r\leq \delta'$, then for $x \in \Lambda$ such that $||x||\leq \xi \leq \delta=r$ and for any $I \in B_{2r}$, we have $I+x\in B_{3r} \subseteq B_{\bar{r}}$ hence by Taylor's formula (applied to $\nabla h$ at order $m_0$) we obtain
\[ ||\nabla h(I+x)-\nabla h(I)-\nabla P_I(x)||\leq Mm_0!||x||^{m_0}.  \]   
Therefore, if we assume $r$ sufficiently small with respect to $M$, $m_0$ and $C'$, we get
\[ ||\nabla h(I+x)-\nabla h(I)-\nabla P_I(x)||\leq (C'/2)||x||^{m_0-1}\leq (C'/2)\xi^{m_0-1}  \]
and hence, projecting the above vector onto $\Lambda$, we have
\begin{equation}\label{bout2}
||\nabla h_{\lambda}(I+x)-\nabla h_{\lambda}(I)-\nabla P_{I,\Lambda}(x)||\leq (C'/2)\xi^{m_0-1}
\end{equation}
where $\lambda=I+\Lambda$. Letting $I'=I+x$, the inequalities~\eqref{bout1} and~\eqref{bout2} yield, for any $I \in B_{2r}$, the inequality 
\begin{equation*}
\max_{0 \leq \eta \leq \xi}\;\min_{||I-I'||=\eta, \; I' \in \lambda}||\nabla h_{\lambda}(I')-\nabla h_{\lambda}(I)||> (C'/2)\xi^{m_0-1}=C\xi^{m_0-1}
\end{equation*}
for all $0 < \xi\leq \delta=r$. This concludes the proof. 
\end{proof}

We will also need a version of Proposition~\ref{propsteep1}, in which $I=0 \in \R^n$ is replaced by a compact set $K \subseteq \R^n$.

\begin{proposition}\label{propsteep2}
Let $K \subseteq \R^n$ be a compact set, $h : V_{\bar{r}}K \rightarrow \R$ be a function of class $C^{m_0+1}$ such that $||\nabla h(I)||\geq\varpi>0$ and $T_I^{m_0}h$ is $(\rho',C',\delta')$-stably steep for any $I \in K$. Then for $r>0$ sufficiently small with respect to $\bar{r}, ||h||_{C^{m_0+1}(V_{\bar{r}}K)}, \rho', \varpi, m_0, C'$ and $\delta'$, the function $h$ is $(\kappa,C,\delta,m_0-1)$-steep on $V_{2r}K$ with
\[ \kappa=\varpi/2, \quad C=C'/2, \quad \delta=r.  \] 
\end{proposition}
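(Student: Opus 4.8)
The plan is to deduce Proposition~\ref{propsteep2} from Proposition~\ref{propsteep1} by a compactness argument, essentially copying the proof of Proposition~\ref{propsteep1} but carrying a point $I_0 \in K$ along instead of the origin. First I would observe that the only place where the base point entered the proof of Proposition~\ref{propsteep1} was through (a) the lower bound $\|\nabla h(0)\| \geq \varpi$ and (b) the stable steepness of the Taylor polynomial $T_0^{m_0}h$; both hypotheses are now assumed uniformly over $I_0 \in K$. Moreover the ``radius of validity'' of the argument in Proposition~\ref{propsteep1} — how small $r$ must be — depended only on $\bar r$, $M = \|h\|_{C^{m_0+1}}$, $\rho'$, $\varpi$, $m_0$, $C'$ and $\delta'$, and these are exactly the quantities that are uniform in the present statement (with $M = \|h\|_{C^{m_0+1}(V_{\bar r}K)}$).

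Concretely, fix $r>0$ small enough with respect to $\bar r, M, \rho', \varpi, m_0, C', \delta'$ so that the conclusion of Proposition~\ref{propsteep1} applies near every point of $K$; we may also arrange $3r < \bar r$. Take any $I_0 \in K$; then $B_{\bar r}(I_0) \subseteq V_{\bar r}K$, the function $h$ restricted to $B_{\bar r}(I_0)$ is $C^{m_0+1}$ with $\|h\|_{C^{m_0+1}(B_{\bar r}(I_0))} \leq M$, satisfies $\|\nabla h(I_0)\| \geq \varpi$, and $T_{I_0}^{m_0}h$ is $(\rho', C', \delta')$-stably steep. Applying (the translated version of) Proposition~\ref{propsteep1} centered at $I_0$, we conclude that $h$ is $(\varpi/2, C'/2, r, m_0-1)$-steep on $B_{2r}(I_0)$. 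Since this holds for every $I_0 \in K$ with the same constants $\kappa = \varpi/2$, $C = C'/2$, $\delta = r$, and since $V_{2r}K = \bigcup_{I_0 \in K} B_{2r}(I_0)$, the steepness inequality of Definition~\ref{funcsteep} holds at every point of $V_{2r}K$ with these uniform constants, which is exactly the claim. (For a point $I \in V_{2r}K$ one picks any $I_0 \in K$ with $\|I - I_0\| < 2r$ and invokes the steepness estimate on $B_{2r}(I_0)$ at $I$.)

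Alternatively, and perhaps more cleanly, I would simply rerun the proof of Proposition~\ref{propsteep1} verbatim with the origin replaced by an arbitrary $I_0 \in K$: the smallness conditions on $r$ are all in terms of $M, \rho', \varpi, m_0, C', \delta'$ (not of $I_0$), the stable steepness of $P_{I_0} := T_{I_0}^{m_0}h$ degrades to $(\rho'/2, C', \delta')$-stable steepness at every nearby $I \in B_{2r}(I_0)$ uniformly, Taylor's formula for $\nabla h$ at order $m_0$ gives $\|\nabla h(I+x) - \nabla h(I) - \nabla P_I(x)\| \leq M m_0! \|x\|^{m_0}$ whenever $I + x \in V_{\bar r}K$ (which holds since $I \in B_{2r}(I_0)$ and $\|x\| \leq r$ force $I+x \in B_{3r}(I_0) \subseteq V_{\bar r}K$), and the rest of the chain of inequalities~\eqref{bout1}--\eqref{bout2} goes through unchanged. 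I do not expect a genuine obstacle here: the statement is deliberately a routine ``uniformization over a compact set'' of the previous proposition, and the only thing to be careful about is bookkeeping — making sure that the single threshold on $r$ works simultaneously for all base points, which is guaranteed precisely because the hypotheses $\|\nabla h\| \geq \varpi$ and $(\rho', C', \delta')$-stable steepness of $T_I^{m_0}h$ are assumed with constants independent of $I \in K$, and because $K$ is compact so that $V_{\bar r}K$ is a bounded neighborhood on which $\|h\|_{C^{m_0+1}}$ is finite.
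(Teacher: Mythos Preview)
Your proposal is correct and matches the paper's approach exactly: the paper itself does not give a separate proof but simply states that the argument is completely analogous to Proposition~\ref{propsteep1}, which is precisely what you do by rerunning that proof with the base point $0$ replaced by an arbitrary $I_0 \in K$ and observing that all smallness thresholds on $r$ depend only on the uniform data $\bar r, M, \rho', \varpi, m_0, C', \delta'$.
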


The proof is completely analogous to the proof of Proposition~\ref{propsteep1}, so we do not repeat the argument. 

\section{Proof of the main results}\label{s4}

In this section, we give the proofs of Theorem~\ref{Th1} and Theorem~\ref{Th2}. Theorem~\ref{Th1} follows from Theorem~\ref{thmMP}, Proposition~\ref{propsteep1} and a version of Nekhoroshev estimates for perturbations of steep integrable Hamiltonians in Gevrey classes, which is stated as Theorem~\ref{thmNek} in Appendix~\ref{app}. Theorem~\ref{Th2} follows from Theorem~\ref{thmP}, Proposition~\ref{propsteep2} and Theorem~\ref{thmNek}.

\begin{proof}[Proof of Theorem~\ref{Th1}]
Recall that we are considering $H \in \mathcal{H}^\alpha_L(\omega)$, with $\omega$ satisfying~\eqref{dio}, and that we are assuming that $H_{m_0} \in SS(n,m_0)$, for $m_0:=[n^2/2+2]$. We can apply Theorem~\ref{thmMP}: setting $\beta=\alpha(1+\tau)+1$, there exist positive constants $\bar{r},L_1,L_2,A$, which depend only on $n$, $\gamma$, $\tau$, $\alpha$, $L$ and $||H||_{\alpha,L,B}$ and a symplectic transformation
\[ \Phi : \T^n \times B_{\bar{r}} \rightarrow \T^n \times B \]
whose components belong to $G^{\alpha,\beta}_{L_1,L_2}(\T^n \times B_{\bar{r}})$, such that
\[ H \circ \Phi(\theta,I)=H_*(I)+R_*(\theta,I), \quad H_* \in G^{\beta}_{L_2}(B_{\bar{r}}), \quad R_* \in G^{\alpha,\beta}_{L_1,L_2}(\T^n \times B_{\bar{r}})  \]
and with the following properties:
\begin{itemize}
\item[(1)] for any $(\theta,I) \in \T^n \times B_{\bar{r}})$ (resp. for any $(\theta',I') \in \Phi(\T^n \times B_{\bar{r}})$), we have $\Pi_I\Phi(\theta,I)=\mathrm{Id}+O\left(||I||^2\right)$ (resp. $\Pi_{I'}\Phi(\theta',I')=\mathrm{Id}+O\left(||I'||^2\right)$), where $\Pi_I$ (resp. $\Pi_{I'}$) denotes projection onto action space;
\item[(2)] the Taylor series of $H_*$ at $I=0$ is given by the Birkhoff formal series $H_{\infty}$;
\item[(3)] $\partial_I^l R_*(\theta,0)=0$ for all $\theta \in \T^n$ and all $l\in \N^n$: hence for any $r$ such that $0<2r < \bar{r}$, 
\[ ||R_*||_{\alpha,\beta,L_1,L_2,\bar{B}_{2r}} \leq A\exp\left(-(2L_2r)^{-\frac{1}{\alpha(1+\tau)}}\right). \]
\end{itemize}
Let us choose $L_3 \geq \max\{L_1,L_2\}$. Then since $H_* \in G^{\beta}_{L_2}(B_{\bar{r}}) \subseteq G^{\beta}_{L_2}(B_{2r}) \subseteq G^{\beta}_{L_3}(B_{2r})$, we have
\[ ||H_*||_{\beta,L_3,B_{2r}}\leq F\]
for some positive constant $F$. Since $\beta>\alpha$, using $(3)$, we also have 
\[ ||R_*||_{\beta,L_3,\bar{B}_{2r}} \leq A\exp\left(-(2L_2r)^{-\frac{1}{\alpha(1+\tau)}}\right). \]
Then, using $(2)$, we have $T^{m_0}_0H_*=H_{m_0}$, hence   $T^{m_0}_0H_*\in SS(n,m_0)$ from our condition~\eqref{condG}. Together with the fact that $\nabla H_*(0)=\omega \neq 0$ this implies, by Proposition~\ref{propsteep1}, that for $r$ sufficiently small, the function $H_*$ is $(\kappa,C,r,m_0-1)$-steep on $B_{2r}$, for some positive constants $\kappa$ and $C$. We can therefore apply Theorem~\ref{thmNek} to the Hamiltonian $H \circ \Phi$, with
\[ \mu=A\exp\left(-(2L_2r)^{-\frac{1}{\alpha(1+\tau)}}\right). \]
Assuming $r>0$ sufficiently small, the threshold~\eqref{seuil} of Theorem~\ref{thmNek} is satisfied, and we find that for any solution $(\tilde{\theta}(t),\tilde{I}(t))$ of the Hamiltonian system associated to $H \circ \Phi$ with $\tilde{I}(0) \in B_{5r/4}$, then
\[ \tilde{I}(t)\in B_{3r/2}, \quad |t|\leq \exp\left(c''\mu^{-\frac{1}{2n\beta a}}\right). \]
Now observe that by choosing some positive constant $C$ sufficiently large, and requiring $r$ to be sufficiently small, we obtain
\[c''\mu^{-\frac{1}{2n\beta a}}=c''A^{-\frac{1}{2n\beta a}}\exp\left((2n\beta a)^{-1}(2L_2r)^{-\frac{1}{\alpha(1+\tau)}}\right) \geq \exp\left((Cr)^{-\frac{1}{\alpha(1+\tau)}}\right) \]
and thus we have in particular
\begin{equation}\label{estimf}
\tilde{I}(t)\in B_{3r/2}, \quad |t|\leq \exp\left(\exp\left((Cr)^{-\frac{1}{\alpha(1+\tau)}}\right)\right).
\end{equation}
To conclude the proof, consider an arbitrary solution $(\theta(t),I(t))$ of the Hamiltonian system associated to $H$, with $I(0) \in B_r$. Then $(\tilde{\theta}(t),\tilde{I}(t))=\Phi^{-1}(\theta(t),I(t))$ is a well-defined solution of $H \circ \Phi$, and using the property $(1)$ above, we can ensure that $\tilde{I}(0) \in B_{5/4r}$. Hence~\eqref{estimf} holds true, and using $(1)$ again, this implies in particular that
\begin{equation*}
I(t)\in B_{2r}, \quad |t|\leq \exp\left(\exp\left((Cr)^{-\frac{1}{\alpha(1+\tau)}}\right)\right)
\end{equation*}
which concludes the proof.
\end{proof} 

\begin{proof}[Proof of Theorem~\ref{Th2}]
Recall that we are considering $H_\varepsilon$ as in~\eqref{H2}, with $h \in G^\alpha_L(\bar{D})$ and $f \in G^\alpha_L(\T^n \times \bar{D})$, and that we are assuming that~\eqref{condK} and~\eqref{condS} are satisfied and that $\gamma>0$ is fixed. We can apply Theorem~\ref{thmP}: there exist positive constants $\bar{\varepsilon}$, $L_1'$, $L_2'$, $A'$, $E$, which depend only on $n$, $\sigma$, $\gamma$, $\tau$, $\alpha$, $L$, $||h||_{\alpha,L,\bar{D}}$, $||f||_{\alpha,L,\bar{D}}$ and on the condition~\eqref{condK}, such that for $\varepsilon \leq \bar{\varepsilon}$, there exists a symplectic transformation
\[ \Psi : \T^n \times D \rightarrow \T^n \times D \]
whose components belong to $G^{\alpha,\beta}_{L_1',L_2'}(\T^n \times D)$, with $\beta=\alpha(1+\tau)+1$, such that
\[ H \circ \Psi(\theta,I)=h_*(I)+f_*(\theta,I), \quad h_* \in G^{\beta}_{L_2'}(D), \quad f_* \in G^{\alpha,\beta}_{L_1',L_2'}(\T^n \times D)  \]
and a diffeomorphism $\omega : D \rightarrow \Omega$ whose components belong to $G^{\beta}_{L_2'}(D)$, with the following properties: 
\begin{itemize}
\item[(1)] $||\Psi-\mathrm{Id}||_{C^1(\T^n \times D)}\leq E\sqrt{\varepsilon}$ and $||\Psi^{-1}-\mathrm{Id}||_{C^1(\T^n \times D)}\leq E\sqrt{\varepsilon}$, where $||\,\cdot\,||_{C^1(\T^n \times D)}$ denotes the $C^1$-norm on $\T^n \times D$;
\item[(2)] $||\partial_I^l\omega(I)-\partial_I^l\nabla h(I)||\leq EL_2'^{|l|}l!^\beta \sqrt{\varepsilon}$ for all $I \in D$ and $l \in \N^n$;
\item[(3)] setting $J_{\gamma,\tau}'=\omega^{-1}(\Omega_{\gamma,\tau}')$, we have $\partial_I^l\omega(I)=\partial_I^l\nabla h_*(I)$ for all $I \in J_{\gamma,\tau}'$ and $l \in \N^n$;   
\item[(4)] we have $\partial_I^l f_*(\theta,I)=0$ for all $(\theta,I)\in \T^n \times J_{\gamma,\tau}'$ and all $l\in \N^n$: hence for any $r>0$ sufficiently small so that $V_{2r}J_{\gamma,\tau}' \subseteq D$,
\[ ||f_*||_{\alpha,\beta,L_1',L_2',V_{2r}J_{\gamma,\tau}'} \leq A'\exp\left(-(2L_2'r)^{-\frac{1}{\alpha(1+\tau)}}\right). \]
\end{itemize}
Then observe that since $SS(n,m_0)$ is open in $P_2(n,m_0)$, by compactness of $\bar{D}$ we can find positive constants $\rho'$, $C'$ and $\delta'$ so that for all $I \in \bar{D}$, $T_I^{m_0} h$ is $(\rho',C',\delta')$-stably steep. From $(2)$ and $(3)$ and for $\varepsilon$ small enough, we have that for $I \in J_{\gamma,\tau}'$, $T_I^{m_0} h_*$ is $(\rho'/2,C',\delta')$-stably steep. Using $(3)$ again, we have $\omega(I)=\nabla h_*(I)$ for all $I \in J_{\gamma,\tau}'$, therefore $\nabla h_*(J_{\gamma,\tau}')=\omega(J_{\gamma,\tau}')=\Omega_{\gamma,\tau}'$ is a compact subset of non-zero vectors and hence we can find $\varpi>0$ so that $||\nabla h_*(I)||\geq\varpi$ for any $I \in J_{\gamma,\tau}'$. We can apply Proposition~\ref{propsteep2} to $h_*$, with $K=J_{\gamma,\tau}'$, and for $r$ sufficiently small, the function $h_*$ is $(\kappa,C,r,m_0-1)$-steep on $V_{2r}J_{\gamma,\tau}'$, for some positive constants $\kappa$ and $C$. Then, since
\[ V_{2r}J_{\gamma,\tau}'=\bigcup_{I^* \in J_{\gamma,\tau}'}V_{2r}(\{I^*\})=\bigcup_{I^* \in J_{\gamma,\tau}'}B_{2r}(I^*), \]
by using $(4)$ we can apply Theorem~\ref{thmNek} exactly as in the proof of Theorem~\ref{Th1}, to arrive at the following statement: if $r>0$ is sufficiently small, for any $I^* \in J_{\gamma,\tau}'$ and any solution $(\tilde{\theta}(t),\tilde{I}(t))$ of the Hamiltonian system associated to $H \circ \Psi$, with $||\tilde{I}(0)-I^*|| \leq 5r/4$, we have
\begin{equation}\label{est}
||\tilde{I}(t)-I^*|| \leq 3r/2, \quad |t|\leq \exp\left(\exp\left((Cr)^{-\frac{1}{\alpha(1+\tau)}}\right)\right)
\end{equation}
for some positive constant $C$. Now, for $I_* \in J_{\gamma,\tau}'$, let $\omega_*=\omega(I_*)=\nabla h_*(I_*) \in \Omega_{\gamma,\tau}'$, and let us define
\[ \tilde{\mathcal{T}}_{\omega_*}^\varepsilon=\T^n \times {I^*}, \quad \tilde{\mathcal{K}}':=\T^n \times J_{\gamma,\tau}'=\bigcup_{\omega_* \in \Omega_{\gamma,\tau}'}\tilde{\mathcal{T}}_{\omega_*}^\varepsilon \subseteq \T^n \times D. \]
Then it follows easily from $(4)$ that $\tilde{\mathcal{K}}'$ is a set of Lagrangian Diophantine tori which is invariant by the Hamiltonian flow of $H \circ \Psi$, and whose complement in $\T^n \times D$ has a measure of order $\gamma$. Moreover, from~\eqref{est}, given any invariant torus $\tilde{\mathcal{T}}_{\omega_*}^\varepsilon$, any solution $(\tilde{\theta}(t),\tilde{I}(t))$ of the Hamiltonian system associated to $H \circ \Psi$, with $(\tilde{\theta}(0),\tilde{I}(0)) \in V_{5r/4}\tilde{\mathcal{T}}_{\omega_*}^\varepsilon$, satisfy
\begin{equation}\label{est2}
(\tilde{\theta}(t),\tilde{I}(t))\in V_{3r/2}\tilde{\mathcal{T}}_{\omega_*}^\varepsilon, \quad |t|\leq \exp\left(\exp\left((Cr)^{-\frac{1}{\alpha(1+\tau)}}\right)\right).
\end{equation}
Coming back to the original Hamiltonian $H$, we define
\[ \mathcal{T}_{\omega_*}^\varepsilon=\Psi(\tilde{\mathcal{T}}_{\omega_*}^\varepsilon), \quad \mathcal{K}':=\Psi(\tilde{\mathcal{K}}')=\bigcup_{\omega_* \in \Omega_{\gamma,\tau}'}\mathcal{T}_{\omega_*}^\varepsilon \subseteq \T^n \times D. \]
Since $\Psi$ is symplectic, $\mathcal{K}'$ is a set of Lagrangian Diophantine tori which is invariant by the Hamiltonian flow of $H$, and since $\Psi$ (as well as its inverse $\Psi^{-1}$) is close to the identity as expressed in $(1)$, the complement of $\mathcal{K}'$ in $\T^n \times D$ has also a measure of order $\gamma$. To conclude, using the estimate $(1)$ again and for $\varepsilon>0$ sufficiently small, $\Psi$ (as well as its inverse $\Psi^{-1}$) is Lipschitz with a Lipschitz constant sufficiently close to $1$, and so the inequality~\eqref{est2} implies the following statement: given any invariant torus $\mathcal{T}_{\omega_*}^\varepsilon$, any solution $(\theta(t),I(t))$ of the Hamiltonian system associated to $H \circ \Psi$, with $(\theta(0),I(0)) \in V_{r}\mathcal{T}_{\omega_*}^\varepsilon$, satisfy
\begin{equation*}
(\theta(t),I(t))\in V_{2r}\mathcal{\mathcal{T}_{\omega_*}^\varepsilon}, \quad |t|\leq \exp\left(\exp\left((Cr)^{-\frac{1}{\alpha(1+\tau)}}\right)\right).
\end{equation*}
Since we can choose any $\tau>n-1$, for any $u<\frac{1}{\a n}$ we have that each tori in the family $\mathcal{K}'$ is doubly-exponentially stable with exponent $u$, hence $h$ is KAM doubly exponentially stable with exponent $u$. This concludes the proof.
\end{proof} 

\appendix

\section{Nekhoroshev estimates for Gevrey steep Hamiltonian systems}\label{app}

The aim of this Appendix is to give a version of Nekhoroshev estimates for perturbations of steep integrable Hamiltonians in Gevrey classes. For real-analytic Hamiltonians, they were obtained by Nekhoroshev in his seminal works (\cite{Nek77}, \cite{Nek79}); here, for Gevrey smooth Hamiltonians we will follow the method of~\cite{Bou11cmp} and~\cite{BFN15}. 

Given $r>0$, let us denote by $B_{2r}(I_*)$ the ball of radius $2r$ around $I_*$ in $\R^n$ and consider a Hamiltonian as follows:
\begin{equation}\label{HamN}
\begin{cases}
H(\theta,I)=h(I)+f(\theta,I), \quad h : B_{2r}(I_*) \rightarrow \R, \quad f: \T^n \times B_{2r}(I_*) \rightarrow \R \\
||h||_{\beta,L,B_{2r}(I_*)} \leq F, \quad ||f||_{\beta,L,B_{2r}(I_*)} \leq \mu.   
\end{cases}
\end{equation}
Then we have the following result.

\begin{theorem}\label{thmNek}
Let $H$ be as in~\eqref{HamN}, and assume that $h$ is $(\kappa,C,r,p)$-steep on $B_{2r}(I_*)$. Then there exist positive constants $\mu_0$, $c$ $c'$ and $c''$, which depend only on $n$, $\beta$, $L$, $F$, $\kappa$, $C$ and $p$, such that if 
\begin{equation}\label{seuil}
\mu \leq \min\{\mu_0,cr^{2na}\},
\end{equation}
then for any solution $(\theta(t),I(t))$ of the Hamiltonian system associated to $H$ with $I(0) \in B_{5r/4}(I_*)$, we have
\[ ||I(t)-I(0)||\leq c'\mu^{\frac{1}{2na}}, \quad |t|\leq \exp\left(c''\mu^{-\frac{1}{2n \beta a}}\right) \]
where
\[a=1+p+p^2+\cdots+p^{n-1}.\]
In particular, we have
\[ I(t)\in B_{3r/2}(I_*), \quad |t|\leq \exp\left(c''\mu^{-\frac{1}{2n\beta a}}\right). \]
\end{theorem}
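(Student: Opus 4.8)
The plan is to reduce the statement to the standard Nekhoroshev theorem for steep integrable Hamiltonians, but tracking carefully the dependence of all constants on $r$, so that the threshold~\eqref{seuil} and the double-exponentially small stability time come out correctly. First I would perform a rescaling of the action variables: set $I = I_* + r J$, so that $J$ ranges in $B_2(0)$, and write $\hat H(\theta,J) = r^{-2}\bigl(H(\theta,I_*+rJ) - H(\theta,I_*)\bigr)$ (the additive constant and the overall factor do not change the dynamics up to a time reparametrisation). Under this rescaling the Gevrey norms transform in a controlled way — the width parameter $L$ becomes $rL$, which only improves the analytic estimates since $r$ is small — and the steepness constants of $h$ rescale so that $\hat h$ becomes $(\kappa r^{-1}, C r^{p-1}, 1, p)$-steep on $B_2(0)$, i.e. steep on a ball of radius $O(1)$ with the steepness radius $\delta$ now equal to $1$. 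The size of the perturbation becomes $\hat\mu \sim \mu r^{-2}$ in the appropriate rescaled norm.

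Next I would invoke the Gevrey Nekhoroshev theorem for steep Hamiltonians on a fixed domain — precisely the construction carried out in~\cite{Bou11cmp} and~\cite{BFN15}, which combines the analytic smoothing of Gevrey functions (approximation by functions analytic on a shrinking complex strip) with the geometric part of Nekhoroshev's argument (resonant normal forms on a covering of the action space by resonant blocks, followed by the steepness-based confinement argument of Nekhoroshev). This yields, for $\hat\mu$ below an absolute threshold $\mu_0$ depending only on $n,\beta,L,F,\kappa,C,p$, a stability estimate of the form $\|J(t)-J(0)\| \lesssim \hat\mu^{1/(2na)}$ for $|t| \leq \exp(c''\hat\mu^{-1/(2n\beta a)})$, with $a = 1+p+\dots+p^{n-1}$ the exponent produced by the iterated steepness mechanism over dimensions $1,\dots,n-1$. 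Translating back via $I = I_*+rJ$ gives $\|I(t)-I(0)\| \lesssim r\,\hat\mu^{1/(2na)} \sim r^{1-1/(na)}\mu^{1/(2na)} \lesssim \mu^{1/(2na)}$, using $r \leq 1$ and that $1 - 1/(na) \geq 0$; so $c'$ can be taken independent of $r$. For the time, $\hat\mu^{-1/(2n\beta a)} \sim (\mu r^{-2})^{-1/(2n\beta a)} \geq \mu^{-1/(2n\beta a)}$ again since $r\leq 1$, giving the claimed lower bound on $|t|$ with a possibly adjusted $c''$.

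The threshold~\eqref{seuil} then records exactly the two constraints needed: $\hat\mu \leq \mu_0$ translates into $\mu \leq \mu_0 r^2$ (absorbed into $\mu \leq cr^{2na}$ since $2na \geq 2$ and $r\leq 1$), while the requirement that the displacement $c'\mu^{1/(2na)}$ be smaller than $r/4$ — so that a solution starting in $B_{5r/4}(I_*)$ cannot leave $B_{3r/2}(I_*)$ — reads $c'\mu^{1/(2na)} \leq r/4$, i.e. $\mu \leq c\, r^{2na}$ for a suitable $c$. Taking the minimum of these gives~\eqref{seuil}, and the final inclusion $I(t) \in B_{3r/2}(I_*)$ follows from the displacement bound together with $I(0) \in B_{5r/4}(I_*)$ by the triangle inequality. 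The main obstacle is the careful bookkeeping in the rescaling step: one must check that the analytic-smoothing estimates underlying the Gevrey Nekhoroshev theorem survive the replacement $L \mapsto rL$ with constants that remain uniform as $r \to 0$ (they do, because shrinking the analyticity width only costs polynomially in $r$, which is dominated by the exponential gain), and that the steepness indices $p_l$ and hence the exponent $a$ are genuinely preserved under affine rescaling of the actions — which is immediate from Definition~\ref{funcsteep} since steepness is an affine-invariant notion up to rescaling the constants $C$ and $\delta$.
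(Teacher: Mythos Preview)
Your rescaling strategy is a plausible alternative to the paper's approach (which simply asserts that the result follows by combining the analytic normal-form construction of \cite{Bou11cmp}, Theorem~2.4, with the geometric confinement argument of \cite{BFN15}, Theorem~D, carried out directly on the domain $B_{2r}(I_*)$ with steepness radius $\delta=r$). However, there is a genuine gap in your bookkeeping, and it is not the one you flag at the end.

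You correctly compute that under $I=I_*+rJ$ the rescaled integrable part $\hat h$ is $(\kappa r^{-1}, Cr^{p-1}, 1, p)$-steep. You then assert that the Gevrey Nekhoroshev theorem applied to $\hat H$ gives a threshold $\mu_0$ and stability constants depending only on the \emph{original} parameters $n,\beta,L,F,\kappa,C,p$. But the theorem you are invoking sees only the rescaled steepness constant $Cr^{p-1}$, and in the regime of interest $p=m_0-1>1$, so this constant degenerates as $r\to 0$. The Nekhoroshev threshold, the confinement radius, and the stability time all depend polynomially on the steepness constant; hence the ``absolute'' $\mu_0$ you invoke, as well as the implicit constants in $\|J(t)-J(0)\|\lesssim\hat\mu^{1/(2na)}$ and in the time bound, all carry hidden powers of $r$. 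Your final remark that steepness is ``affine-invariant up to rescaling $C$ and $\delta$'' is precisely the point: those rescaled constants feed back into the threshold, and you never verify that the resulting $r$-dependence collapses to exactly $\mu\leq cr^{2na}$. It plausibly does (the rescaling is just a change of variables, so the theorem should be scaling-covariant), but checking this requires knowing the precise dependence of the Nekhoroshev constants on $C$---which is the actual content of the estimate and which the direct approach of \cite{Bou11cmp} and \cite{BFN15} produces without this detour. By contrast, the Gevrey-width issue you worry about (replacing $L$ by $rL$) is harmless, since a smaller width parameter only improves the Gevrey class.
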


In the situations we are interested in, $r$ is a small parameter, and $\mu$ is exponentially small with respect to some power of $r^{-1}$ so that~\eqref{seuil} is automatically satisfied.  

The second part of the above statement is obviously a direct consequence of the first part. We will not prove the first part of Theorem~\ref{thmNek}, but we claim that it follows from the method of proof of~\cite{Bou11cmp}, Theorem 2.4 and~\cite{BFN15}, Theorem D. Indeed, the analytical part (the construction of resonant normal forms on suitable domains) can be taken from Theorem 2.4 of~\cite{Bou11cmp}, and combined with geometric part (the confinement argument) of Theorem D in~\cite{BFN15} to give a proof of Theorem~\ref{thmNek}. Let us recall that~\cite{BFN15} deals with elliptic fixed points in real-analytic systems; yet the geometric part is not sensible to the regularity of the system, and is the same in Cartesian or action-angle coordinates. Alternatively, one can only uses~\cite{Bou11cmp}, but at the expense of a worst exponent $a$ (which is, however, not important for our purpose here).

\addcontentsline{toc}{section}{References}
\bibliographystyle{amsalpha}
\bibliography{DoubleExpTore10}

\end{document}